\theoremstyle{plain}
\newtheorem{thm}{Theorem}[section]                                          
\newtheorem{prop}[thm]{Proposition}                          
\newtheorem{cor}[thm]{Corollary}
\theoremstyle{definition}
\theoremstyle{remark}
\newtheorem{rem}[thm]{Remark}
\makeatletter \@addtoreset{equation}{section} \makeatother
\newcommand{\N}{\mathbb{N}}
\newcommand{\Z}{\mathbb{Z}}
\newcommand{\set}[1]{\left\{#1\right\}}
\newcommand{\cP}{\mathcal{P}}
\newcommand{\cF}{\mathscr{F}}
\newcommand{\Min}{{\rm{Min}}}
\newcommand{\conn}{\leadsto}
\begin{document}

\title[Exponential growth for linear systems]
{On exponential growth for a certain class of linear systems}
\author{Ryoki Fukushima}
\thanks{The first author was supported in part by JSPS Fellowships 
for Young Scientists and JSPS Grant-in-Aid for Research Activity 
Start-up 22840019}
\address{
{\rm Ryoki Fukushima}\\
Department of Mathematics, Graduate School of Science and Engineering, 
Tokyo Institute of Technology, Tokyo 152-8551, Japan.}
\email{\tt ryoki@math.titech.ac.jp}

\author{Nobuo Yoshida}
\thanks{The second author was supported in part by JSPS Grant-in-Aid 
for Scientific Research, Kiban (C) 21540125}
\address{
{\rm Nobuo Yoshida}\\
Division of Mathematics, Graduate School of Science, Kyoto University, 
Kyoto 606-8502, Japan.}
\email{\tt nobuo@math.kyoto-u.ac.jp}
\keywords{}
\subjclass[2000]{Primary 60K35; Secondary 82C41. }

\begin{abstract}
We consider a class of stochastic growth models on the integer lattice 
which includes various interesting examples such as the number of open paths
in oriented percolation and the binary contact path process.
Under some mild assumptions, we show that the total mass of the process 
grows exponentially in time whenever it survives. 
More precisely, we prove that there exists an open path, oriented in time,
along which the mass grows exponentially fast.
\end{abstract}

\maketitle

\section{Introduction}
\label{Intro}
\subsection{Overview}
We consider a class of stochastic growth models on the integer lattice
$\Z^d$ which includes a time discretization and a special 
case of the `linear systems' 
discussed in Chapter IX of Liggett's book~\cite{Lig85}. 
One of the simplest examples is the number of distinct open paths on 
the cluster of contact process studied  
by~\cite{Gri83} and as is discussed there, it can be thought of
as a model of population growth with spatial structure. 
There has recently been some progress on this type of models such as; 
phase transition for the growth rate of total population~\cite{Yos08};
diffusivity~\cite{Nak10,NY09,NY10a} or localization~\cite{Yos10,NY10b}
of the population density.
However, the following fundamental question remains:
does the total population grow exponentially whenever it survives?
It is well known that the answer is affirmative for
the classical Galton-Watson process 
(see, e.g., Corollary~1.6 on p.20 in~\cite{AH83}). 
In this paper, we show that the same assertion holds for a fairly 
general growth models with spatial structure.
In fact, we show that there exists a single path along 
which the population grows exponentially. 
\begin{rem}
We say that a process survives if there always exists at least one
particle, which seems natural.
Note, however, that in the theory of linear systems, 
the term survival 
often refers to the stronger condition that the total population 
grows as fast as its expectation, 
see e.g.~Theorem~2.4 on p.433 in~\cite{Lig85}. 
\end{rem}

\subsection{Setting and main results}
\label{Setting and main results}
Let us start by describing the definition of the process. 
Although we have results for both discrete and continuous time
processes, we first focus on the discrete time case and 
discuss the continuous time case in Section~\ref{application}. 
We write $\N$ for the set of nonnegative integers and $\N^*$ for 
$\N \setminus \{0\}$. 
Let $B=(B_{x,y})_{x,y \in \Z^d}$ be a random matrix of infinite 
size whose entries take values in $\{0\}\cup [1,\infty)$. 
We assume that $B$ is translation invariant in the sense that 
$(B_{x,y})_{x,y \in \Z^d}$ and 
$(B_{x+z,y+z})_{x,y \in \Z^d}$ has the same law for any $z \in \Z^d$. 
Using independent copies $\{B_n\}_{n \in \N^*}$
of $B$, we define a Markov chain 
$\{M_n\}_{n \in \N}=\{(M_{n,x})_{x \in \Z^d}\}_{n \in \N}$ as follows: 
\begin{equation}
	M_{0,x}=\delta_{o, x} \textrm{ and }
	M_{n, x}=\sum_{y \in \Z^d} M_{n-1, y}B_{n, y, x} \textrm{ for }
        n \in \N^*, \label{evol}
\end{equation}
where $o$ denotes the origin of $\Z^d$ and $\delta_{x,y}$ the Kronecker delta: 
\begin{equation}
	\delta_{x,y}=
	\begin{cases}
		1 & \textrm{if } x=y, \\
		0 & \textrm{if } x \neq y.
	\end{cases}
\end{equation}
The resulting process is $[0,\infty]^{\Z^d}$-valued since the sum
in~\eqref{evol} may diverge. 
If we regard $M_n$ as a row vector, we can rewrite the above equation as
\begin{equation}
	M_n=(\delta_{o, x})_{x \in \Z^d} B_1 \cdots B_n.
\end{equation}
for $n \in \N^*$.   
We denote the total mass of the process by 
\begin{equation}
	|M_n|\stackrel{\rm def} = \sum_{x \in \Z^d}M_{n, x}.
\end{equation}
We call a sequence $\{\Gamma(n)\}_{n=k}^{l}
\subset \Z^d$ ($k < l \le \infty$) 
an open path if $B_{n+1, \Gamma(n), \Gamma(n+1)}\ge 1$ 
for all $k \le n < l$. 
For the sake of shorthand, let
\begin{equation}
	c_{\delta}(B)= P(|M_n| \ge 1 \textrm{ \upshape for all }
        n \in \N)\sup_{x \in \Z^d}P(B_{o,x} \ge 1+\delta) \label{rate}
\end{equation}
for $\delta>0$. Now we are in position to state our main result.
\begin{thm}\label{main}
Suppose that there exists $\delta>0$ such that $c_{\delta}(B_1)>0$. 
Then for each $\epsilon>0$, 
there exists a random open path $\{\Gamma(n)\}_{n \in \N}$ such that
\begin{equation}
	\liminf_{n \to \infty}\frac{1}{n} \log M_{n, \Gamma(n)} 
	\ge c_{\delta-\epsilon}(B_1) 
	\log(1+\delta-\epsilon)\label{growth} 
\end{equation}
almost surely on $\{|M_n| \ge 1 \textrm{ \upshape for all }n \in \N\}$. 
In particular, we have 
\begin{equation}
	\liminf_{n \to \infty}\frac{1}{n} \log |M_n| 
	\ge c_{\delta}(B_1) \log(1+\delta)\label{growth2}
\end{equation}
almost surely on $\{|M_n| \ge 1 \textrm{ \upshape for all }n \in \N\}$
(since $c_{\delta-\epsilon}(B_1)\ge c_{\delta(B_1)}$ and by the
continuity of the logarithm).
\end{thm}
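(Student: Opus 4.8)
The plan is to reduce the statement to a purely combinatorial claim about the existence of a single infinite open path carrying a positive density of ``strong'' edges, and then to build such a path by a greedy construction whose successes behave like independent trials. Set $\delta'=\delta-\epsilon$. The first step is the elementary observation that for any open path $\{\Gamma(n)\}_{n\in\N}$ with $\Gamma(0)=o$, keeping a single term in the sum~\eqref{evol} gives
\begin{equation*}
	M_{n,\Gamma(n)}\ge\prod_{k=1}^{n}B_{k,\Gamma(k-1),\Gamma(k)}\ge(1+\delta')^{N_n},\qquad N_n:=\#\{1\le k\le n:\ B_{k,\Gamma(k-1),\Gamma(k)}\ge 1+\delta'\},
\end{equation*}
since every factor along an open path is $\ge 1$ while the strong factors are $\ge 1+\delta'$. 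Hence $\tfrac1n\log M_{n,\Gamma(n)}\ge\tfrac{N_n}{n}\log(1+\delta')$, and it suffices to construct, almost surely on $\{|M_n|\ge1\ \forall n\}$, an infinite open path with $\liminf_n N_n/n\ge c_{\delta'}(B_1)$.

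For the construction I would fix a direction $x^*$ nearly realizing the supremum, $P(B_{o,x^*}\ge1+\delta')\ge q'-\eta$ with $q'=\sup_x P(B_{o,x}\ge1+\delta')$ and $\eta>0$ small, and write $p=P(|M_n|\ge1\ \forall n)$, so that $c_{\delta'}(B_1)=pq'$ and $p>0$ by the hypothesis $c_\delta(B_1)>0$. The key point is an independence: for a site $y$ reached at time $k$, the event $\{B_{k+1,y,y+x^*}\ge1+\delta'\}$ that the forward edge toward $y+x^*$ is strong depends only on $B_{k+1}$, whereas the event $\mathrm{Surv}(k+1,y+x^*)$ that the process restarted from a single particle at $y+x^*$ at time $k+1$ survives depends only on $B_{k+2},B_{k+3},\dots$. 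By translation invariance and the independence of $\{B_n\}$ these are independent, with product probability $\ge(q'-\eta)p$. I call such a step a \emph{good step}: it simultaneously records a strong edge and re-establishes a configuration from which the path can be continued indefinitely.

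The assembly then runs as follows. Working on the survival event, I would iterate a greedy rule: from the current certified-alive point, attempt a good step; on success advance along the strong edge and re-root there, and on failure follow any open edge that preserves survival (one exists on $\{|M_n|\ge1\ \forall n\}$) without recording a strong factor. The successive successes form trials with success probability at least $(q'-\eta)p$, so a strong law of large numbers should give $\liminf_n N_n/n\ge(q'-\eta)p$; letting $\eta\downarrow0$ yields $\ge c_{\delta'}(B_1)$ and hence~\eqref{growth}. Finally~\eqref{growth2} follows by letting $\epsilon\downarrow0$, using $c_{\delta-\epsilon}(B_1)\ge c_\delta(B_1)$ and continuity of the logarithm, exactly as indicated in the statement.

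I expect the main obstacle to be that $\mathrm{Surv}(k+1,\cdot)$ is a tail event: the greedy rule above is not a finite-time measurable procedure, the selected path is adaptive, and conditioning on the global survival event destroys the clean independence of the fresh blocks $B_{k+1},B_{k+2},\dots$ from the past. Making ``re-establish a surviving configuration'' rigorous is therefore the heart of the argument. The resolution I would pursue is to replace the tail certificate by a finite-horizon proxy, namely that the restarted process is still alive at time $k+1+L$, to build paths up to time $n$ using this $L$-step look-ahead, and to control the resulting error as $L\to\infty$ and then $n\to\infty$; the independence across successive attempts, and hence the law-of-large-numbers input, must be set up at the level of these finite-horizon trials, for instance through a Borel--Cantelli argument counting certified good steps.
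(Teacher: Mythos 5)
Your opening reduction is sound and matches the paper's: along any open path every factor in $\prod_{k}B_{k,\Gamma(k-1),\Gamma(k)}$ is at least $1$, so it suffices to produce an open path carrying heavy bonds with asymptotic density $c_{\delta-\epsilon}(B_1)$, and the factorization of a single ``good step'' into a present-measurable heavy-bond event and a future-measurable survival event is exactly the mechanism the paper exploits. But the construction you propose has a genuine gap at precisely the point you flag, and the fix you sketch does not close it. Your greedy path's position at time $k$ is chosen using survival certificates, i.e.\ using $B_{k+1},B_{k+2},\dots$; consequently the location at which the $k$-th trial is attempted is correlated with the very matrices that decide whether the trial succeeds, and the assertion that the successes ``form trials with success probability at least $(q'-\eta)p$'' conditional on the past is unjustified (conditioning on having reached a surviving site biases the law of the next row of $B_{k+1}$). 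The finite-horizon proxy does not repair this: with an $L$-step look-ahead the position at time $k$ still depends on $B_{k+1},\dots,B_{k+L}$, which overlap with the matrices governing the next $L$ trials, so independence across attempts is still unavailable; restricting to one attempt per length-$L$ block restores independence but degrades the density by a factor $1/L$ and loses the stated constant. A further unaddressed point is the failure branch ``follow any open edge that preserves survival'': without a finite-range assumption (which Theorem~\ref{main} does not make) the existence of a child that is itself a percolation point requires a compactness argument that can fail when a row of $B$ has infinitely many nonzero entries.

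The paper's resolution is structurally different and is the actual content of the proof. It first builds an auxiliary path $\gamma$ that is \emph{not} required to be open: $\gamma(n+1)$ is chosen by purely local rules (prefer the heavy direction $x$ if the bond is open; otherwise any open bond; otherwise backtrack to the most recent $k$ with $|M^{(k,\gamma(k))}_{n+1}|\ge 1$), so that $(\gamma(0),\dots,\gamma(n))$ is $\cF_{1,n}$-measurable --- no future information enters the path's position. The good event $G_n(\gamma)=\{B_{n+1,\gamma(n),\gamma(n)+x}\ge 1+\delta,\ (n+1,\gamma(n+1))\in\cP\}$ then splits over the three independent $\sigma$-fields $\cF_{1,n}$, $\sigma(B_{n+1})$, $\cF_{n+2,\infty}$, giving $P(G_n(\gamma))=c_\delta(B_1)$ exactly. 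The law of large numbers for $\{1_{G_n(\gamma)}\}$ is not obtained from independent trials but from stationarity and mixing, both of which rest on a renewal property (Proposition~\ref{renewal}): after a percolation point $(m,\gamma(m))$ the path coincides with the freshly started $\gamma^{(m,\gamma(m))}$, and the tail events in $G_n$ are approximated monotonically by finite-horizon connectivity events. Finally, the open path $\Gamma$ is threaded through the consecutive percolation points of $\gamma$ at finite times, which sidesteps the compactness issue above. These three ingredients --- a past-measurable backtracking path, the renewal/ergodicity argument replacing your i.i.d.\ heuristic, and the two-stage construction of $\Gamma$ from $\gamma$ --- are missing from the proposal and constitute the heart of the argument.
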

The following corollary is more useful in applications than 
Theorem~\ref{main} 
(see Section~\ref{application} below). 
In what follows, we denote the matrix product 
$B_1 \cdots B_m$ by $\prod_{k=1}^m B_k$. 
\begin{cor}\label{iteration}
Suppose that there exist $\delta>0$ and $m \in \N^*$ such that 
$c_{\delta}(\prod_{k=1}^m B_k)>0$. 
Then for each $\epsilon>0$,
there exists a random open path $\{\Gamma(n)\}_{n \in \N}$ such that
\begin{equation}
	\liminf_{n \to \infty}\frac{1}{n} \log M_{n, \Gamma(n)} 
	\ge \frac{1}{m}c_{\delta-\epsilon}({\textstyle \prod_{k=1}^m} B_k) 
	\log(1+\delta-\epsilon)\label{growth}
\end{equation}
almost surely on $\{|M_n| \ge 1 \textrm{ \upshape for all }n \in \N\}$. 
In particular, we have
\begin{equation}
	\liminf_{n \to \infty}\frac{1}{n} \log |M_n| 
	\ge \frac{1}{m}c_{\delta}({\textstyle \prod_{k=1}^m} B_k) 
        \log(1+\delta)
\end{equation}
almost surely on $\{|M_n| \ge 1 \textrm{ \upshape for all }n \in \N\}$.
{Suppose on the other hand 
that $c_{\delta}(\prod_{k=1}^m B_k)=0$ for all $\delta>0$ and
$m \in \N$ and that there exists $r_B>0$ such that 
$B_{x,y}=0$ if $|x-y| \ge r_B$. Then} 
\begin{equation}
	{\limsup_{n \to \infty}}\frac{1}{n} \log |M_n| 
	\le 0\label{nongrowth}
\end{equation}
almost surely with the convention $\log 0=-\infty$.
\end{cor}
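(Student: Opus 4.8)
The plan is to prove the two assertions by different routes: the growth bounds reduce to Theorem~\ref{main} applied to a subsampled chain, whereas the subexponential bound \eqref{nongrowth} is the substantive part and rests on a dichotomy governed by the survival probability.

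For the growth bounds I would pass to the chain observed every $m$ steps. Set $\tilde B_N=\prod_{k=(N-1)m+1}^{Nm}B_k$ for $N\in\N^*$; these are independent copies of $\prod_{k=1}^m B_k$, they are translation invariant, and each entry, being a finite or countable sum of products of entries of the $B_k$, again lies in $\{0\}\cup[1,\infty]$, so the hypotheses of Theorem~\ref{main} are met. The chain they drive is exactly $\tilde M_N:=M_{Nm}$, so Theorem~\ref{main} produces an open path $\tilde\Gamma$ of the $m$-step chain with $\liminf_N\frac1N\log\tilde M_{N,\tilde\Gamma(N)}\ge c_{\delta-\e}(\prod_{k=1}^m B_k)\log(1+\delta-\e)$ on $\{|M_{Nm}|\ge 1\text{ for all }N\}$. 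Two routine steps finish this half. First, a nonzero entry $\tilde B_{N+1,\tilde\Gamma(N),\tilde\Gamma(N+1)}\ge 1$ means there is at least one genuine open path of the original chain joining $\tilde\Gamma(N)$ at time $Nm$ to $\tilde\Gamma(N+1)$ at time $(N+1)m$; concatenating these fragments yields an honest open path $\Gamma$ with $\Gamma(Nm)=\tilde\Gamma(N)$. Second, since the mass never decreases along an open path, $M_{n,\Gamma(n)}\ge\tilde M_{N,\tilde\Gamma(N)}$ for $Nm\le n<(N+1)m$, and rescaling time by replacing $N$ with $n/m$ is precisely what produces the factor $1/m$. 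As $\{|M_n|\ge 1\text{ for all }n\}\subseteq\{|M_{Nm}|\ge 1\text{ for all }N\}$, the conclusion holds in particular on the survival event in the statement.

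For \eqref{nongrowth} I would split on $p:=P(|M_n|\ge 1\text{ for all }n)$. The decisive observation, and where the finite range assumption $B_{x,y}=0$ for $|x-y|\ge r_B$ enters, is the identity
\begin{equation}
\{|M_n|\ge 1\text{ for all }n\}=\{|M_n|>0\text{ for all }n\}\qquad\text{a.s.}
\end{equation}
The inclusion $\subseteq$ is trivial. For the reverse, if $|M_n|>0$ for every $n$ then there is an open path from $o$ of every length; these paths form a tree of out-degree at most $\#\{z:|z|<r_B\}<\infty$, so K\"onig's lemma yields an infinite open path $\Gamma$, along which $M_{n,\Gamma(n)}\ge M_{0,o}=1$ by monotonicity of the mass, whence $|M_n|\ge 1$ for all $n$.

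Granting this identity, if $p=0$ then a.s.\ $|M_n|=0$ for some $n$, and since $0$ is absorbing, $|M_n|=0$ eventually, so $\limsup_n\frac1n\log|M_n|=-\infty$. If instead $p>0$, then the survival factor in $c_\delta(\prod_{k=1}^m B_k)$, namely $P(|M_{Nm}|\ge 1\text{ for all }N)$, is at least $p>0$ for every $m$; by \eqref{rate} the hypothesis $c_\delta(\prod_{k=1}^m B_k)=0$ then forces $\sup_x P((\prod_{k=1}^m B_k)_{o,x}\ge 1+\delta)=0$ for all $\delta>0$ and all $m$, i.e.\ $(\prod_{k=1}^m B_k)_{o,x}\le 1$ a.s. Taking $m=n$ gives $M_{n,x}\le 1$ simultaneously for all $n,x$ (a countable intersection), while finite range confines the support of $M_n$ to the $O(n^d)$ sites with $|x|<nr_B$; hence $|M_n|$ grows at most polynomially and again $\limsup_n\frac1n\log|M_n|\le 0$. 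The main obstacle is exactly the non-growth bound, and within it the scenario in which the total mass dips below $1$ and later recovers to grow exponentially: it is the K\"onig's lemma identity above that excludes this, by tying any everywhere-positive trajectory to an infinite open path and hence to genuine survival. Once growth and survival are linked in this way, the two remaining estimates are routine.
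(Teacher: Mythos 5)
Your proposal is correct and follows essentially the same route as the paper: subsample every $m$ steps, apply Theorem~\ref{main} to $\prod_{k=1}^m B_k$, concatenate the resulting path fragments into an open path for the original chain, and for \eqref{nongrowth} split on the survival probability, deducing in the surviving case that all products $\prod_{k=1}^n B_k$ are binary matrices and invoking the finite range to bound the support of $M_n$ polynomially. The only divergence is your K\"onig's lemma argument for the identity $\{|M_n|\ge 1\ \forall n\}=\{|M_n|>0\ \forall n\}$, which is correct but unnecessary: since the entries of $B$ lie in $\{0\}\cup[1,\infty)$, an immediate induction shows $M_{n,x}\in\{0\}\cup[1,\infty]$ for every $(n,x)$, which is why the paper treats the extinction case as obvious.
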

\begin{rem}
The main point of the above results is its generality. 
We put no independence assumptions on the 
elements of $(B_{x,y})_{x,y\in\Z^d}$, 
nor the finite range assumption 
except for the last assertion in Corollary~\ref{iteration}.
This for instance allows us to obtain analogous 
results for certain continuous time models by simply
applying the discrete time results
(see Subsection~\ref{CT-analogue}). 
\end{rem}
\begin{rem}
As a special case of our results, it follows that the number of 
open paths of length $n$ in 
supercritical oriented percolation grows exponentially in $n$
(see Subsection~\ref{LSE} below).
In this special case, the following interesting result has recently
obtained by~\cite{KNPS10}, which is valid also in the subcritical
phase. Consider the oriented paths of length $n$ which 
go through a maximal number of open sites. 
Then, the number of such {\it maximal} paths grows exponentially in
$n$ for all $p>0$. However, 
this work seems to have only limited overlap with ours 
since they mainly focus on the subcritical phase where 
the number of such paths does not 
obey the evolution rule~\eqref{evol}. 
\end{rem}

\section{Proof of Theorem~\ref{main} and Corollary~\ref{iteration}}
\label{proof}
We prove Theorem~\ref{main} and Corollary~\ref{iteration} 
in this section.
Let us briefly explain the strategy to prove Theorem~\ref{main}. 
We are going to find an infinite open 
path $\Gamma$ which goes through 
{\it heavy bonds}, 
i.e.~$B_{n+1,\Gamma(n),\Gamma(n+1)}\ge 1+\delta{-\epsilon}$, 
many times.
To this end, we first construct a path $\gamma$ which is 
not necessarily open but it prefers to go through heavy bonds and 
its construction uses only local information, that is, 
it does not refer to the future and also not too much to the past.
Next, we consider certain good events
$\{G_n(\gamma)\}_{n \in \N}$ such that
\begin{enumerate}
 \item if $G_n$ happens, then 
       $B_{n+1,\gamma(n),\gamma(n+1)}\ge 1+\delta{-\epsilon}$ and
 \item there exists an infinite open path which shares all
       the bonds where $G_n$ happens with $\gamma$.
\end{enumerate}
Then we can prove, essentially due to the locality of $\gamma$, 
the law of large numbers for $\{1_{G_n(\gamma)}\}_{n \in \N}$
and this ensures the existence of the above $\Gamma$.

The proof of Corollary~\ref{iteration} will be given 
in the final subsection.

We introduce the notation used in the sequel. For $x, y \in \Z^d$ 
and $m, n \in \N$, 
we write 
\begin{equation}
	(m, x) \conn (m+n, y)
\end{equation}
if there exists an open path $\{x(k)\}_{k=m}^{m+n}$ with $x(m)=x$ 
and $x(m+n)=y$. 
We adopt the convention that $(m, x) \conn (m, x)$. 
We define the process started from $(m,x) \in \N \times \Z^d$ by 
\begin{equation}
	M_{m,y}^{(m,x)}=\delta_{x, y} \textrm{ and }
	M_{n, y}^{(m,x)}=\sum_{z \in \Z^d} M_{n-1, z}B_{n, z, y} 
        \textrm{ for }n >m.\label{restart}
\end{equation}
We call $(m,x) \in \N \times \Z^d$ a {\it{percolation point}} if 
\begin{equation}
	|M_n^{(m,x)}| \ge 1 \textrm{ for all } n > m
\end{equation}
and write $\cP$ for the set of all percolation points. 
Finally, we introduce the sigma-fields
\begin{equation}
	\cF_{m,n}=\sigma\left[B_{k, x,y}: 
        m \le k \le n, x, y \in \Z^d \right]
\end{equation}
for $m,n \in \N^*$ with $m \le n$.

\subsection{Construction of the path}\label{construction}
We assume for simplicity that 
\begin{equation}
 \textrm{there exists a site }x \in \Z^d 
 \textrm{ which maximizes }P(B_{o,x} \ge 1+\delta).\label{ass2.1} 
\end{equation}
Otherwise, pick a site for which 
$P(B_{o,x} \ge 1+\delta-\epsilon){>0}$
and replace $\delta$ by
$\delta-\epsilon$ in what follows.
{Let us fix an enumeration of $\Z^d$ and 
write $\Min A$ for the first 
element appearing in $A \subset \Z^d$.}
We define a path 
$\gamma=\{\gamma(n)\}_{n \in \N}$ 
according to the following recursive algorithm: 
\begin{enumerate}
\renewcommand{\labelenumi}{(\roman{enumi})}
\item{Let $\gamma(0)=o$.}
\item{If $(n, \gamma(n)) \conn (n+1, \gamma(n)+x)$, 
then let $\gamma(n+1)=\gamma(n)+x$.}
\item{If $(n, \gamma(n)) \not\conn (n+1, \gamma(n)+x)$ 
and $|M_{n+1}^{(n, \gamma(n))}| \ge 1$, 
then let $$\gamma(n+1)=\gamma(n)+\Min\{y \in \Z^d: 
M_{n+1, \gamma(n)+y}^{(n, \gamma(n))}\ge 1\}.$$}
\item{If $|M_{n+1}^{(n, \gamma(n))}| =0$ and 
$\{k \in \N: k \le n,  |M_{n+1}^{(k, \gamma(k))}| \ge 1\} 
\neq \emptyset$, then let 
$$
	T_n=\max\{k \in \N: k \le n, 
        |M_{n+1}^{(k, \gamma(k))}| \ge 1\}
$$
and 
$$
	\gamma(n+1)=\gamma(T_n)+\Min\{y \in \Z^d: 
        M_{n+1, {\gamma(T_n)+}y}^{(T_n, 
        \gamma(T_n))}\ge 1\}
$$}
\item{If $|M_{n+1}^{(n, \gamma(n))}| =0$ and 
$\{k \in \N: k \le n,  |M_{n+1}^{(k, \gamma(k))}| \ge 1\}
=\emptyset$, then let 
$\gamma(n+1)=o.$}
\end{enumerate}
Also for each $(m,v)\in \N\times \Z^d$, we define a path 
$\gamma^{(m, v)}=\{\gamma^{(m, v)}(n)\}_{n \ge m}$ 
in the same way as above but we let 
$\gamma^{(m, v)}(m)=v$ in (i), 
restrict the ranges of $k$ to $m \le k \le n$ in (iv) and (v),
and let $\gamma^{(m, v)}(n+1)=v$  in (v).
We denote by $T_n^{(m,v)}$ the corresponding $T_n$.

The following properties are obvious from the construction: 
\begin{align}
	&(n, \gamma^{(m, v)}(n)) \conn (n+1, \gamma^{(m, v)}(n)+x) 
        \textrm{ implies }
	\gamma^{(m, v)}(n+1)=\gamma^{(m, v)}(n)+x, \label{preference} \\
	&(\gamma^{(m, v)}(m), \ldots, \gamma^{(m, v)}(n))\textrm{ is }
        \cF_{m+1,n}\textrm{-measurable}.\label{measurability} 
\end{align}
We also know that the construction does not go back beyond a 
percolation point, 
which will be crucial in the proof of Theorem~\ref{main}: 
\begin{prop}\label{renewal}
Let $\gamma=\gamma^{(0,o)}$ be the path constructed above 
{and
$l, m \in \N$. 
Then on the event $|M_{m+l}^{(m, \gamma(m))}| \ge 1$,}
\begin{enumerate}
\renewcommand{\labelenumi}{(\alph{enumi})}
\item{$\gamma(n)=\gamma^{(m, \gamma(m))}(n)$ for all 
$n = m, m+1, \ldots , m+l$, }
\item{$(m, \gamma(m)) \conn (n, \gamma(n))$ for all 
$n = m, m+1, \ldots , m+l$.}
\end{enumerate}
In particular, if $(m, \gamma(m)) \in \cP$, then {\upshape (a)} 
and {\upshape (b)} hold for all $n \ge m$. 
\end{prop}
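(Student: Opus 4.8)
The plan is to argue pathwise, fixing a realization of the matrices $\{B_k\}$ for which $|M_{m+l}^{(m,\gamma(m))}|\ge 1$, and to establish (a) and (b) by induction on $n$ running from $m$ to $m+l$. The first thing I would record is the monotonicity of survival that underlies everything: since every entry of each $B_k$ lies in $\{0\}\cup[1,\infty)$, a one-line induction on time shows that each coordinate $M_{n,y}^{(m,x)}$ again lies in $\{0\}\cup[1,\infty)$, and hence that $M_n^{(m,x)}\equiv 0$ forces $M_{n'}^{(m,x)}\equiv 0$ for all $n'>n$. Consequently, on the event $|M_{m+l}^{(m,\gamma(m))}|\ge 1$ one automatically has $|M_{n}^{(m,\gamma(m))}|\ge 1$ for every intermediate $m\le n\le m+l$. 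This single fact is what the whole argument exploits.

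Next I would prove (a) by induction. The base case $n=m$ is the defining identity $\gamma^{(m,\gamma(m))}(m)=\gamma(m)$. For the inductive step, suppose the two paths agree up to time $n<m+l$; then the decision taken at step $n$ by each construction is read off from the same data, namely the values $\gamma(m),\dots,\gamma(n)$ and the matrix $B_{n+1}$, so which of the rules (ii)--(v) fires, and the resulting value in cases (ii) and (iii), is identical for both. The only possible disagreement lies in the backtracking rules, because $\gamma$ maximizes over $0\le k\le n$ whereas $\gamma^{(m,\gamma(m))}$ maximizes only over $m\le k\le n$. Here the survival fact enters decisively: since $|M_{n+1}^{(m,\gamma(m))}|\ge 1$ and $\gamma(m)=\gamma^{(m,\gamma(m))}(m)$, the index $k=m$ already lies in the set $\{k:|M_{n+1}^{(k,\gamma(k))}|\ge 1\}$, so this set is nonempty (ruling out rule (v)) and its maximum $T_n$ satisfies $m\le T_n\le n$. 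Therefore the two maximizations coincide, both constructions select the same $T_n$, and by the inductive hypothesis $\gamma(T_n)=\gamma^{(m,\gamma(m))}(T_n)$, whence $\gamma(n+1)=\gamma^{(m,\gamma(m))}(n+1)$.

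Finally I would deduce (b) by a parallel induction that concatenates open paths. The base case is the convention $(m,\gamma(m))\conn(m,\gamma(m))$. Assuming $(m,\gamma(m))\conn(k,\gamma(k))$ for all $m\le k\le n$, I examine the step $n\to n+1$. If rule (ii) or (iii) governs it, a single heavy bond yields $(n,\gamma(n))\conn(n+1,\gamma(n+1))$ --- for (iii) because $M_{n+1,\gamma(n+1)}^{(n,\gamma(n))}=B_{n+1,\gamma(n),\gamma(n+1)}\ge 1$ --- and splicing this onto the path to $(n,\gamma(n))$ gives the claim. If rule (iv) governs it, then $M_{n+1,\gamma(n+1)}^{(T_n,\gamma(T_n))}\ge 1$, and since positive mass can be produced only along a path all of whose bonds are heavy, this gives $(T_n,\gamma(T_n))\conn(n+1,\gamma(n+1))$; splicing onto the inductive path to $(T_n,\gamma(T_n))$, available because $m\le T_n\le n$, completes the step.

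The main obstacle, and really the crux of the whole statement, is precisely the verification that the backtracking time $T_n$ never drops below $m$; once that is secured via the survival fact, both (a) and (b) follow mechanically. The concluding ``in particular'' assertion is then immediate: if $(m,\gamma(m))\in\cP$ then $|M_{m+l}^{(m,\gamma(m))}|\ge 1$ for every $l$, so (a) and (b) hold for all $n\ge m$.
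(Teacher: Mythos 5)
Your proof is correct and takes essentially the same route as the paper's: an induction on $n$ from $m$ to $m+l$ whose crux is that the survival assumption $|M_{m+l}^{(m,\gamma(m))}|\ge 1$ forces the backtracking time $T_n$ to lie in $[m,n]$, so that $\gamma$ and $\gamma^{(m,\gamma(m))}$ make identical decisions at every step. The only cosmetic differences are that you run (a) and (b) as two parallel inductions rather than one simultaneous one, and that you make explicit the monotonicity of survival that the paper invokes implicitly.
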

\begin{proof}
We prove (a) and (b) simultaneously by induction on 
$n =m, m+1, \ldots, m+l$. 
They are obviously true for $n=m$. 
Suppose that the claims hold up to some $n \ge m$. 
If  $|M_{n+1}^{(n, \gamma(n))}| \ge 1$, then both 
$\gamma(n+1)$ and $\gamma^{(m, \gamma(m))}(n+1)$ 
are chosen by (ii) or (iii) in the algorithm. Since 
$\gamma(n)=\gamma^{(m, \gamma(m))}(n)$ by the induction 
hypothesis, they are chosen in the same manner and thus 
$\gamma(n+1)=\gamma^{(m, \gamma(m))}(n+1)$. 
Moreover, we have $(n, \gamma(n)) \conn (n+1,\gamma(n+1))$ 
in this case and hence it follows that
\begin{equation}
	(m, \gamma(m)) \conn (n, \gamma(n)) \conn (n+1,\gamma(n+1))
\end{equation}
by the induction hypothesis. 
If, on the other hand, $|M_{n+1}^{(n, \gamma(n))}| =0$, then note that 
$|M_{n+1}^{(m, \gamma(m))}| \ge 1$ as long as $n+1 \le m+l$ 
by the assumption. 
In particular, it follows that $\gamma(n+1)$ is chosen by (iv) 
in the algorithm and $m \le T_n \le n$. 
Then the induction hypothesis shows that (I) $T_n=T^{(m,\gamma(m))}_n$, 
(II) $\gamma(T_n)=\gamma^{(m, \gamma(m))}(T^{(m,\gamma(m))}_n)$, and 
(III) $(m, \gamma(m)) \conn (T_n,\gamma(T_n)) $. 
Since we have
\begin{equation}
\begin{split}
	\gamma(n+1)&=\Min\{y \in \Z^d: 
        M_{n+1, y{+\gamma(T_n)}}^{(T_n, \gamma(T_n))}\ge 1\}\\
	&=\Min\{y \in \Z^d: 
        M_{n+1, y{+\gamma^{(m, \gamma(m))}(T^{(m,\gamma(m))}_n))}}
        ^{(T^{(m,\gamma(m))}_n, 
        \gamma^{(m, \gamma(m))}(T^{(m,\gamma(m))}_n))}\ge 1\}\\
	&=\gamma^{(m, \gamma(m))}(n+1)
\end{split}
\end{equation}
by (I) and (II) and
\begin{equation}
	(m, \gamma(m)) \conn (T_n,\gamma(T_n)) \conn (n+1, \gamma(n+1))
\end{equation}
by (III), the proof is complete.
\end{proof}

Finally, we construct an open path $\Gamma$ on $\{(0,o) \in \cP\}$ 
by connecting percolation points
on $\gamma$. Assume $(0,o) \in \cP$ and let $\tau_1=0$ and 
\begin{equation}
	\tau_{n+1}=\inf\{k > \tau_n: (k, \gamma(k)) \in \cP\}
\end{equation}
for $n \ge 1$, so that $(\tau_n, \gamma(\tau_n))$ are nothing but 
the $n$-th percolation points on $\gamma$. 
We may assume that $\tau_n<\infty$ for all $n \in \N^*$ since it will 
be proved in Proposition~\ref{LLN} below. 
(Note that Proposition~\ref{LLN} refers to $\gamma$ only.)
Let us first set $\Gamma(\tau_n)=\gamma(\tau_n)$ for all $n \in \N^*$. 
Next for $\tau_n < k < \tau_{n+1}$ (if any), we define $\Gamma(k)$ 
recursively as follows: Given $\Gamma(k-1)$, let 
\begin{equation}
	\Gamma(k)=\Min\{y \in \Z^d: (k-1, \Gamma(k-1)) \conn 
        (k, y) \conn (\tau_{n+1}, \gamma(\tau_{n+1}))\}.\label{Gamma-const} 
\end{equation}
{Note that 
$(\tau_n, \gamma(\tau_n)) \conn (\tau_{n+1}, \gamma(\tau_{n+1}))$
by Proposition~\ref{renewal}-(b).
Then, by induction on $k$, we see that the set on the 
right-hand side of~\eqref{Gamma-const} is nonempty and 
that $(k-1, \Gamma(k-1)) \conn (k, \Gamma(k))$ for all 
$k\in (\tau_n, \tau_{n+1}]$.}
\begin{rem}
A similar construction was used by~\cite{Kuc89} to show 
the central limit theorem for 
the right edge of (1+1)-dimensional supercritical oriented percolation. 
\end{rem}

\subsection{Law of large numbers for good events}\label{LLN for G_n}
Throughout this subsection, 
we keep assuming~\eqref{ass2.1}
and write $\gamma$ and $\Gamma$ for the 
paths constructed in Subsection~\ref{construction}. 
We consider the following {\it good} event 
\begin{equation}
	G_n(\gamma)=\{ B_{n+1, \gamma(n), \gamma(n)+x} 
        \ge 1+\delta, (n+1, \gamma(n+1)) \in \cP\}
\end{equation}
for each $n \in \N$. Note that on $G_n(\gamma)$, we have 
$\gamma(n+1)=\gamma(n)+x$ by~\eqref{preference} 
and $(n, \gamma(n)) \in \cP$. 
It follows in particular that 
\begin{equation}
	\gamma(n)=\Gamma(n)\textrm{ and }\gamma(n+1)=\Gamma(n+1)
\end{equation}
on $G_n(\gamma)\cap\{(0,o) \in \cP\}$. 
The following proposition shows that $M_{n, \Gamma(n)}$ is multiplied 
by at least $(1+\delta)$ when $G_n(\gamma)$ occurs 
and thus explains why this event is good. 
\begin{prop}\label{good}
On the event $G_m(\gamma)\cap\{(0,o) \in \cP\}$, $M_{n, \Gamma(n)} 
\ge (1+\delta) M_{m, \Gamma(m)}$ for all $n > m$. 
\end{prop}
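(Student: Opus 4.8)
The plan rests on two elementary observations, both consequences of the nonnegativity of the entries of $B$ together with the linear evolution rule~\eqref{evol}: a single heavy bond at time $m$ contributes a factor $1+\delta$, and along the open path $\Gamma$ the mass can never subsequently decrease. Since every term $M_{n,y}$ and every weight $B_{n,y,z}$ is nonnegative, I may lower-bound the sum in~\eqref{evol} by retaining a single summand, and this is essentially the only mechanism the proof uses.

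First I would record what the event supplies. On $G_m(\gamma)\cap\{(0,o)\in\cP\}$ the identifications noted just above the statement give $\Gamma(m)=\gamma(m)$ and $\Gamma(m+1)=\gamma(m+1)=\gamma(m)+x$, while the definition of $G_m(\gamma)$ yields $B_{m+1,\Gamma(m),\Gamma(m+1)}=B_{m+1,\gamma(m),\gamma(m)+x}\ge 1+\delta$. Retaining only the $y=\Gamma(m)$ term in~\eqref{evol} then gives the base step
\[ M_{m+1,\Gamma(m+1)}=\sum_{y\in\Z^d}M_{m,y}B_{m+1,y,\Gamma(m+1)}\ge M_{m,\Gamma(m)}B_{m+1,\Gamma(m),\Gamma(m+1)}\ge(1+\delta)M_{m,\Gamma(m)}. \]

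Next I would show that $\{M_{n,\Gamma(n)}\}_{n\ge m+1}$ is nondecreasing. On $\{(0,o)\in\cP\}$ the sequence $\Gamma$ is a genuine open path --- this is exactly how it was constructed, via Proposition~\ref{renewal}-(b) and the induction following~\eqref{Gamma-const} --- so $B_{n,\Gamma(n-1),\Gamma(n)}\ge 1$ for every $n$. Keeping only the $y=\Gamma(n-1)$ term in~\eqref{evol} gives
\[ M_{n,\Gamma(n)}=\sum_{y\in\Z^d}M_{n-1,y}B_{n,y,\Gamma(n)}\ge M_{n-1,\Gamma(n-1)}B_{n,\Gamma(n-1),\Gamma(n)}\ge M_{n-1,\Gamma(n-1)}. \]
Iterating this from $m+1$ up to $n$ and composing with the base step yields $M_{n,\Gamma(n)}\ge M_{m+1,\Gamma(m+1)}\ge(1+\delta)M_{m,\Gamma(m)}$ for all $n>m$, which is the assertion.

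I do not expect a serious obstacle; the whole argument is a direct consequence of linearity and nonnegativity. The two points deserving care are the identification of $\Gamma$ with $\gamma$ at times $m$ and $m+1$, which is legitimate precisely because both $(m,\gamma(m))$ and $(m+1,\gamma(m+1))$ are percolation points on $G_m(\gamma)$ and therefore lie on $\Gamma$, and the fact that discarding all but one summand in~\eqref{evol} is a valid lower bound only because all masses and all bond weights are nonnegative. Once these are in hand the chain of inequalities closes immediately.
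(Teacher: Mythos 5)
Your proof is correct and follows essentially the same route as the paper's: a single-summand lower bound in \eqref{evol} using the heavy bond $B_{m+1,\Gamma(m),\Gamma(m+1)}\ge 1+\delta$ at the first step, followed by monotonicity of $M_{n,\Gamma(n)}$ along the open path $\Gamma$. The paper's version is merely terser, leaving the identification of $\Gamma$ with $\gamma$ at times $m$ and $m+1$ implicit in the remark preceding the proposition.
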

\begin{proof}
We have $M_{m+1, \Gamma(m+1)}\ge B_{m+1, \Gamma(m), \Gamma(m+1)}
M_{m, \Gamma(m)} \ge (1+\delta) M_{m, \Gamma(m)}$
by definition. Since $\Gamma$ is an open path, we have 
$M_{n, \Gamma(n)}\ge M_{m+1, \Gamma(m+1)}$ for all $n > m+1$ and the 
claim follows.  
\end{proof}
Thanks to this proposition, we have the lower bound 
\begin{equation}
	M_{n, \Gamma(n)} \ge (1+\delta)^{\sum_{m=0}^{{n-1}} 
	1_{G_m(\gamma)}}
\end{equation}
for the process along $\Gamma$ on the event $\{(0,o) \in \cP\}$. 
Therefore, the proof of Theorem~\ref{main} is reduced to proving the 
following law of large numbers 
for $\{1_{G_n(\gamma)}\}_{n \in \N}$. 

\begin{prop}\label{LLN}
\begin{equation}
	\frac{1}{n}\sum_{m=0}^{{n-1}} 1_{G_m(\gamma)} \to c_{\delta}(B)
        \label{prop3}
\end{equation}
as $n \to \infty$ $P$-almost surely, where $c_{\delta}(B)$ is defined 
in \eqref{rate}.
\end{prop}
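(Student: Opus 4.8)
The plan is to first show that every good event has the same probability, $\E[1_{G_m(\gamma)}]=c_\delta(B)$, and then to promote this identity for the means into an almost sure statement for the Cesàro averages $N_n/n$, where $N_n=\sum_{m=0}^{n-1}1_{G_m(\gamma)}$, by exploiting the regeneration recorded in Proposition~\ref{renewal}. For the mean I would split $G_m(\gamma)=H_m\cap P_m$ with the heavy-bond event $H_m=\{B_{m+1,\gamma(m),\gamma(m)+x}\ge 1+\delta\}$ and the survival event $P_m=\{(m+1,\gamma(m)+x)\in\cP\}$; on $H_m$ one has $\gamma(m+1)=\gamma(m)+x$ by~\eqref{preference}, so this is consistent with the definition of $G_m(\gamma)$. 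By~\eqref{measurability} the position $\gamma(m)$ is $\cF_{1,m}$-measurable, while $P_m$ depends only on $B_{m+2},B_{m+3},\dots$ and on the location $\gamma(m)+x$; translation invariance of $B$ then gives $\E[1_{P_m}\mid\cF_{1,m+1}]=\theta$ with $\theta=P(|M_n|\ge 1\text{ for all }n)$, irrespective of that location. Hence $\E[1_{G_m}\mid\cF_{1,m+1}]=\theta\,1_{H_m}$, and conditioning once more on $\cF_{1,m}$ and using~\eqref{ass2.1} yields $\E[1_{H_m}\mid\cF_{1,m}]=q$ with $q=\sup_y P(B_{o,y}\ge 1+\delta)$. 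Therefore $\E[1_{G_m}]=\theta q=c_\delta(B)$ for every $m$, so in particular $\E[N_n/n]=c_\delta(B)$ exactly for all $n$.

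The heavy-bond part alone already obeys a law of large numbers for a trivial reason, which helps locate the actual difficulty: since $\gamma$ is adapted, $1_{H_m}$ is $\cF_{1,m+1}$-measurable with $\E[1_{H_m}\mid\cF_{1,m}]=q$, so $\sum_m(1_{H_m}-q)$ is a martingale with bounded increments and the martingale strong law gives $\tfrac1n\sum_{m=0}^{n-1}1_{H_m}\to q$ almost surely. The survival factor $P_m$, by contrast, depends on the entire future $\{B_\ell\}_{\ell>m+1}$ and cannot be controlled by any such forward martingale estimate; moreover the events $P_m$ and $P_{m'}$ share the far-future randomness, so their covariances need not decay, ruling out a naive second-moment argument.

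To handle $P_m$ I would use the regeneration built into the construction. Let $\sigma_1<\sigma_2<\cdots$ enumerate the percolation points on $\gamma$, i.e.\ the times $k$ with $(k,\gamma(k))\in\cP$. By Proposition~\ref{renewal} the path never looks back beyond such a point: once $(\sigma_j,\gamma(\sigma_j))\in\cP$, the subsequent path coincides with the freshly restarted path $\gamma^{(\sigma_j,\gamma(\sigma_j))}$, which by~\eqref{measurability}, the independence of $\{B_\ell\}_{\ell>\sigma_j}$, and the translation invariance of $B$ is a space-translated independent copy of the original construction. This produces a regenerative structure in which, after the first regeneration, the blocks $\Xi_j=\big(\sigma_{j+1}-\sigma_j,\,(1_{G_m})_{\sigma_j\le m<\sigma_{j+1}}\big)$ are independent and identically distributed. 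Granting this, the renewal--reward strong law gives $N_n/n\to\rho$ almost surely for some deterministic $\rho\in[0,1]$; and since $0\le N_n/n\le 1$ with $\E[N_n/n]=c_\delta(B)$ for every $n$, bounded convergence forces $\rho=c_\delta(B)$, which is exactly~\eqref{prop3}. (When $\theta=0$ one has $c_\delta(B)=0$ and $\cP=\emptyset$ almost surely, so $N_n\equiv 0$ and the statement is trivial; when $\theta>0$, repeated independent fresh restarts each survive with probability $\theta>0$, so by a Borel--Cantelli argument $\gamma$ meets infinitely many percolation points and the first block is almost surely finite, which also supplies the finiteness of $\tau_n$ used in Subsection~\ref{construction}.)

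The main obstacle is precisely the legitimacy of this regeneration: a percolation point is \emph{not} a stopping time, since $\{(k,\gamma(k))\in\cP\}$ is an event about the whole future $\{B_\ell\}_{\ell>k}$, so one cannot condition on $\cF_{1,\sigma_j}$ and invoke a strong Markov property directly. This is the same future-dependence that afflicts the right edge of oriented percolation, and I would resolve it in the spirit of~\cite{Kuc89} cited after~\eqref{Gamma-const}: fixing the location by writing $\gamma(\sigma_j)=v$, the event $\{(\sigma_j,v)\in\cP\}$ is measurable with respect to the future increments alone, and translation invariance shows that, conditionally on the location and on it being a percolation point, the shifted future is a fresh copy of the system conditioned on survival, independent of $\cF_{1,\sigma_j}$. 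Making this decoupling precise --- in particular verifying that the blocks $\Xi_j$ are genuinely i.i.d.\ despite the non-adapted definition of the $\sigma_j$ --- is the technical heart of the argument, and is exactly the point at which Proposition~\ref{renewal} does its essential work.
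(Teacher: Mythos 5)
Your plan is coherent and genuinely different from the paper's (regeneration/renewal--reward in the spirit of \cite{Kuc89} versus the paper's stationarity-plus-mixing argument), and your computation of the mean, $\E[1_{G_m(\gamma)}]=c_\delta(B)$ for every $m$, is correct and matches the paper's. But the proof as written has a genuine gap exactly where you flag one: the i.i.d.\ property of the blocks between successive percolation points is asserted, not proved, and it is the entire content of the proposition. The difficulty is not only that $\sigma_j$ is not a stopping time. The event $\{\sigma_1=k,\ \gamma(k)=v\}$ contains, besides $\{(k,v)\in\cP\}$, the requirement that $(j,\gamma(j))\notin\cP$ for every $j<k$, and each of these is a priori an event about the whole future $\{B_\ell\}_{\ell>k}$ as well; so your sentence ``conditionally on the location and on it being a percolation point, the shifted future is a fresh copy conditioned on survival, independent of $\cF_{1,\sigma_j}$'' is precisely the statement that needs proof, not a consequence of translation invariance. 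The decoupling can be rescued: by Proposition~\ref{renewal}-(b), on $\{(k,\gamma(k))\in\cP\}$ one has, for $j<k$, that $(j,\gamma(j))\notin\cP$ if and only if $|M_k^{(j,\gamma(j))}|=0$, so the non-percolation of all earlier points on $\gamma$ becomes $\cF_{1,k}$-measurable and factors away from $\{(k,v)\in\cP\}\in\cF_{k+1,\infty}$. This observation is the essential use of Proposition~\ref{renewal} in your scheme, and it is absent.

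A second gap concerns the event $A$ that $\gamma$ meets infinitely many percolation points. If $P(A)<1$ while $c_\delta(B)>0$, then $N_n/n\to 0$ on $A^c$ (only finitely many $G_m$ occur there) and your bounded-convergence identification only yields $\rho\,P(A)=c_\delta(B)$, not the claimed a.s.\ constant limit; so you must prove $P(A)=1$. Your Borel--Cantelli sketch does not do this: the events $\{(m,\gamma(m))\in\cP\}$ for different $m$ share the far-future matrices and are not independent, and the conditional Borel--Cantelli lemma does not apply because these events are not adapted. In the paper this issue does not arise, because the infinitude of percolation points is a \emph{consequence} of the law of large numbers rather than an input to it. The paper's route is self-contained: it proves stationarity and mixing of $\{1_{G_n(\gamma)}\}$ directly, the key device being the replacement of the percolation condition in $G_m(\gamma)\cap G_n(\gamma)$ by the finite-window event $\tilde G_{m,n}(\gamma)=\{B_{m+1,\gamma(m),\gamma(m)+x}\ge 1+\delta,\ (m+1,\gamma(m+1))\conn(n,\gamma(n))\}$, which is measurable with respect to finitely many matrices, factorizes over the position of $\gamma$ at intermediate times, and decreases to $G_m(\gamma)$ as $n\to\infty$. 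If you wish to keep the renewal route, you must supply the decoupling lemma above and an independent proof that percolation points occur infinitely often; as it stands, neither is in place.
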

\begin{proof}
By the independence of $B_1$ and $\{B_n\}_{n \ge 2}$ and translation 
invariance, we have
\begin{equation}
\begin{split}
	P(G_0(\gamma)) &= P(B_{1, o, x} \ge 1+\delta)P((1,x) \in \cP)\\
	&=P(B_{1, o, x} \ge 1+\delta)P((0,o) \in \cP).
\end{split}
\end{equation}
Thus~\eqref{prop3} is indeed the law of large numbers for 
$\{1_{G_n(\gamma)}\}_{n \in \N}$.

Let us first show that $\{1_{G_n(\gamma)}\}_{n \in \N}$ is a stationary 
sequence. 
Fix an increasing sequence of integers $1\le m_1<m_2< \cdots <m_k$ 
and consider the probability
$P(\bigcap_{i=1}^k G_{m_i}(\gamma))$. 
We divide the event according to the position of $\gamma({m_1})$ and use Proposition~\ref{renewal}-(a) to get 
\begin{equation}
	P\left(\bigcap_{i=1}^k G_{m_i}(\gamma)\right)=
	\sum_{y \in \Z^d}P\left(\{\gamma({m_1})=y\}\cap \bigcap_{i=1}^k G_{m_i}(\gamma^{({m_1}, y)})\right).
\end{equation}
Then by the independence of $\{\gamma({m_1})=y\} \in \cF_{1, {m_1}}$ and 
$\bigcap_{i=1}^k G_{m_i}(\gamma^{({m_1}, y)}) \in \cF_{{m_1}+1, \infty}$ and translation invariance, 
it follows that the above right hand side equals
\begin{equation}
	\sum_{y \in \Z^d}P(\gamma({m_1})=y)P\left(\bigcap_{i=1}^k 
	G_{m_i-{m_1}}(\gamma)\right)
	= P\left(\bigcap_{i=1}^k G_{m_i-{m_1}}(\gamma)\right). 
\end{equation}
Applying the same argument to $m_i'=m_i-1$ ($1\le i \le k$), 
it follows that
\begin{equation*}
P\left(\bigcap_{i=1}^k G_{m_i}(\gamma)\right)
=P\left(\bigcap_{i=1}^k G_{m_i-m_1}(\gamma)\right) 
=P\left(\bigcap_{i=1}^k G_{m_i-1}(\gamma)\right),  
\end{equation*}
which implies the stationarity of $\{1_{G_n(\gamma)}\}_{n \in \N}$. 

Next, we prove that $\{1_{G_n(\gamma)}\}_{n \in \N}$ has the so-called 
mixing property, which implies the ergodicity and hence the law of 
large numbers (cf.~\cite{PTE}, Section 6.4). 
By Lemma~6.4.4 in~\cite{PTE}, it suffices to 
show that for any pair of increasing sequences of integers 
$0\le l_1<l_2< \cdots <l_j$ and $0\le m_1<m_2< \cdots <m_k$,
\begin{equation}
	P\left(\bigcap_{i=1}^j G_{l_i}(\gamma)\cap \bigcap_{i=1}^k 
        G_{m_i+n}(\gamma)\right) \to 
	P\left(\bigcap_{i=1}^j G_{l_i}(\gamma)\right)
	P\left(\bigcap_{i=1}^k G_{m_i}(\gamma)\right)
	\label{mixing}
\end{equation}
as $n \to \infty$. 
Note first that for any $m<n$,  if 
$(m+1, \gamma(m+1)) \conn (n, \gamma(n))$ 
and $(n, \gamma(n)) \in \cP$ 
then $(m+1, \gamma(m+1)) \in \cP$. 
Combined with Proposition~\ref{renewal}-(b), this yields 
\begin{equation}
\begin{split}
	&G_{m}(\gamma)\cap G_{n}(\gamma)\\
	&\quad =\{B_{m+1, \gamma(m), \gamma(m)+x} \ge 1+\delta, 
	(m+1, \gamma(m+1)) \conn (n, \gamma(n))\} 
	\cap G_{n}(\gamma). 
\end{split}
\end{equation}
Let us denote the event in braces above by $\Tilde G_{m, n}(\gamma)$. 
Then, we can rewrite the event on the left-hand side of~\eqref{mixing} as 
\begin{equation}
\begin{split}
	\bigcap_{i=1}^j G_{l_i}(\gamma)\cap \bigcap_{i=1}^k G_{m_i+n}(\gamma)
	&=\bigcap_{i=1}^{j-1} \Tilde G_{l_i, l_{i+1}}(\gamma) 
        \cap  \Tilde G_{l_j, m_1+n}(\gamma)
        \cap \bigcap_{i=1}^{k} G_{m_i+n}(\gamma)
\end{split}
\end{equation}
provided $l_j < m_1+n$. 
Dividing the event according to the position of $\gamma(m_1+n)$, 
one can show that
\begin{equation}
\begin{split}
	& P\left(\bigcap_{i=1}^j G_{l_i}(\gamma)\cap \bigcap_{i=1}^k 
        G_{m_i+n}(\gamma)\right)\\
	&\quad=P\left(\bigcap_{i=1}^{j-1} \Tilde G_{l_i,l_{i+1}}(\gamma) 
        \cap  \Tilde G_{l_j, m_1+n}(\gamma)\right)
	P\left( \bigcap_{i=1}^{k} G_{m_i+n}(\gamma) 
        \right)\label{factorization}
\end{split}
\end{equation}
exactly in the same way as in the proof of the stationarity. Let us look at 
\begin{equation}
\begin{split}
	&\Tilde G_{l_j, m_1+n}(\gamma)\\
	&\quad =\{B_{l_j+1, \gamma(l_j), \gamma(l_j)+x} 
        \ge 1+\delta, (l_j+1, \gamma(l_j+1)) \conn (m_1+n, \gamma(m_1+n))\},
\end{split}
\end{equation}
where the dependence on $n$ remains. We recall 
Proposition~\ref{renewal}-(b) to see
\begin{equation}
	|M^{(l_j+1, \gamma(l_j+1)}_{m_1+n}| \ge 1
	\Rightarrow (l_j+1, \gamma(l_j+1)) \conn (m_1+n, \gamma(m_1+n)).
\end{equation}
Since the converse is also valid, we conclude that
\begin{equation}
	\Tilde G_{l_j, m_1+n}(\gamma)
	=\{B_{l_j+1, \gamma(l_j), \gamma(l_j)+x} \ge 1+\delta, 
        |M^{(l_j+1, \gamma(l_j+1)}_{m_1+n}| \ge 1\}
	\downarrow G_{l_j}(\gamma) 
\end{equation}
as $n \to \infty$. Coming back to~\eqref{factorization} and using 
stationarity, we obtain 
\begin{equation}
\begin{split}
	&\lim_{n \to \infty}P\left(\bigcap_{i=1}^j G_{l_i}(\gamma)\cap 
        \bigcap_{i=1}^k G_{m_i+n}(\gamma)\right)\\
	&\quad 
        =P\left( \bigcap_{i=1}^{j-1} \Tilde G_{l_i, l_{i+1}}(\gamma) 
        \cap G_{l_j}(\gamma) \right)
        P\left( \bigcap_{i=1}^k G_{m_i}(\gamma)\right)\\
	&\quad 
        =P\left( \bigcap_{i=1}^j G_{l_i}(\gamma)\right)
        P\left( \bigcap_{i=1}^k G_{m_i}(\gamma)\right)
\end{split}
\end{equation}
and we are done. 
\end{proof}
\subsection{Proof of Corollary~\ref{iteration}}
\begin{proof}[Proof of Corollary~\ref{iteration}]
For the first half, we apply Theorem~\ref{main} to 
$B={\textstyle \prod_{k=1}^m} B_k$ to
find an open path $\{\Gamma(n)\}_{n \in \N}$ with respect to 
$\{{\textstyle \prod_{k=1}^m} B_{mn+k}\}_{n \in \N}$ such that 
\begin{equation}
	\liminf_{n \to \infty}\frac{1}{n} \log M_{mn, \Gamma(n)} 
	\ge c_{\delta{-\epsilon}}({\textstyle \prod_{k=1}^m} B_k)
	\log(1+\delta{-\epsilon}). 
\end{equation}
Since 
$({\textstyle \prod_{k=1}^m} B_{mn+k})_{m(n+1), \Gamma(n), \Gamma(n+1)} 
\ge 1$ implies that 
there exists a sequence $x(0)=\Gamma(n), x(1), \ldots, 
x(m-1), x(m)=\Gamma(n+1)$ such that 
\begin{equation}
	B_{mn+k, x(k-1), x(k)} \ge 1 \textrm{ for all }k=1, \ldots, m, 
\end{equation}
we can construct a path $\Gamma'$ that is open with respect to 
$\{B_n\}_{n \in \N^*}$ and
$\Gamma'(mn)=\Gamma(n)$ for all $n \in \N^*$. 
Then the claim follows from the fact $M_{mn+k, \Gamma'(mn+k)} 
\ge M_{mn, \Gamma'(mn)}$ 
for all $k=1, \ldots , m$.

We next prove the second assertion. 
If $P(|M_n| \ge 1 \textrm{ \upshape for all }n \in \N)=0$, then 
\eqref{nongrowth} is obvious. 
Suppose $\sup_{x \in \Z^d}P((\prod_{k=1}^nB)_{o,x} \ge 1+\delta)=0$
for all $\delta>0$ and $m\in\N$. 
This means that $\prod_{k=1}^nB_k$ is a binary matrix 
for all $n\in\N$ and then it follows 
$M_{n,x} \le 1$ for all $(n,x)\in \N\times\Z^d$.
Now the finite range assumption implies $\#\{x \in \Z^d:
M_{n,x} \ge 1\}\le (2r_Bn)^d$ and hence~\eqref{nongrowth}.
\end{proof}

\section{Applications}\label{application}
\subsection{Linear stochastic evolution and its dual}
The second author has recently introduced a class of stochastic linear 
systems in~\cite{Yos08}, 
called linear stochastic evolutions (LSE for short), 
which contains various interesting 
processes such as the number of open paths in site or bond oriented 
percolation, {a} time discrete version of 
the binary contact path process, and the voter model. 
{Using our results,} we can completely characterize when the 
total mass of {an} LSE grows exponentially. 

Let us first recall the definition of {an} LSE. 
Let $A=( A_{x, y})_{x, y \in \Z^d}$ be a random matrix satisfying 
the following: 
\begin{align}
	& A_{x, y} \ge 0 \textrm{ for all } x, y \in \Z^d, \label{positivity}\\
	& \textrm{the columns } (A_{\,\cdot\,, y})_{y \in \Z^d}
        \textrm{ are independent,}\label{independence}\\
	& A_{x, y} =0 \textrm{ if } |x-y| \ge r_A 
        \textrm{ for some non-random } r_A \in \N,\label{range}\\ 
	& (A_{x+z, y+z})_{x,y \in \Z^d} \stackrel{\rm law}= A 
        \textrm{ for all } z \in \Z^d.\label{shift-inv}
\end{align} 
A few more assumptions were posed in~\cite{Yos08} such as square 
integrability of the matrix elements and 
a certain aperiodicity (cf.~(1.8) in~\cite{Yos08}) but we do not 
need them in this article. 
On the other hand, we need the following extra assumption 
\begin{equation}
	A_{x, y} \in \{0\}\cup [1, \infty) \textrm{ for all } 
        x, y \in \Z^d \label{value}
\end{equation}
to use the results in Section~\ref{Intro}. 
Let $\{A_n\}_{n \in \N^*}$ be a collection of independent copies of 
the random matrix $A$. 
The LSE generated by $\{A_n\}_{n \in \N^*}$ is {the} Markov chain 
$\{N_n\}_{n \in \N}$ with values in $[0,\infty)^{\Z^d}$ defined 
for given $N_0 \in [0,\infty)^{\Z^d}$ by 
\begin{equation}
	N_{n+1, y}=\sum_{x \in \Z^d} N_{n, x}A_{n+1, x, y}, 
        \textrm{ for }n \in \N \textrm{ and }y \in \Z^d. 
\end{equation}
If we consider the dual process of an LSE, then it can be realized 
in the same way as above 
but \eqref{independence} is replaced by
\begin{equation}
	\textrm{the rows } (A_{\,x\,, \cdot})_{x \in \Z^d}
        \textrm{ are independent,}
\end{equation}
see Section 4 in~\cite{Yos08} for detail. 
We call this type of process the dual LSE and write DLSE for short. \\

\noindent
{\it Example 1.} (Oriented percolation):
Let $(\eta_{n, y})_{(n,y) \in \N^* \times \Z^d}$ be $\{0, 1\}$-valued 
independent and identically distributed
random variables with $P(\eta_{n, y}=1)=p \in [0,1]$. 
The LSE $\{N_n\}_{n \in \N}$ generated by 
\begin{equation}
	A_{n, x,y}=\eta_{n,y} 1_{\{|x-y|=1\}}
\end{equation}
represents the number of open oriented paths up to level $n$ in the 
oriented site percolation. 
We call this process {\it the oriented site percolation} for shorthand. 
{\it The bond oriented percolation} can be constructed in a similar way 
(see p.1036 in~\cite{Yos08}). \\

\noindent
{\it Example 2.} (Binary contact path process):
Let  $(\eta_{n, y})_{(n,y) \in \N^* \times \Z^d}$ and 
$(\zeta_{n, y})_{(n,y) \in \N^* \times \Z^d}$ be $\{0, 1\}$-valued 
independent and identically distributed random variables with 
\begin{equation}
	P(\eta_{n, y}=1)=p \in [0,1] \textrm{ and }P(\zeta_{n, y}=1)=q 
        \in [0,1]. 
\end{equation}
We further introduce another family of independent and identically 
distributed random variables $(e_{n, y})_{(n,y) \in \N^* \times \Z^d}$
which are uniformly distributed on $\{e \in \Z^d: |e|=1\}$. 
Then the LSE $\{N_n\}_{n \in \N}$ generated by 
\begin{equation}
	A_{n, x,y}=\eta_{n,y} 1_{\{e_{n,y}=y-x\}}+\zeta_{n, y}\delta_{x,y} 
\end{equation}
gives a time-discrete version of the binary contact path process 
studied in~\cite{Gri83}. 
We simply call this discrete version {\it the binary contact path
process} 
in this article.
We can also define another time-discretization by considering the DLSE 
generated by 
\begin{equation}
	A_{n, x,y}=\eta_{n,x} 1_{\{e_{n,x}=y-x\}}+\zeta_{n, x}\delta_{x,y}. 
\end{equation}
In this DLSE-version, a site at time $n$ chooses the target of infection 
whereas 
in the LSE-version above, a site at time $n+1$ chooses the source of 
infection. \\

The following theorem says that 
we can characterize when LSE and DLSE grow exponentially 
in terms of $A_1$. 
\begin{thm}\label{LSE}
Let $\{N_n\}_{n \in \N}$ be an LSE generated by $\{A_n\}_{n \in \N^*}$ 
satisfying \eqref{value} and $|N_0|<\infty$. 
Then, either of the following holds true: 
\begin{enumerate}
\setlength{\itemsep}{10pt}
\item{If $c_{\delta}(A_1)>0$ for some  $\delta>0$, then 
\begin{equation}
	\liminf_{n \to \infty}\frac{1}{n}\log |N_n| 
        \ge c_{\delta}(A_1)\log(1+\delta) \label{exp-growth}
\end{equation}
$P$-almost surely on the event $\{|N_n| \ge 1 
\textrm{\upshape{ for all }}n \in \N\}$.}
\item{If $P(|N_n| \ge 1 \textrm{\upshape{ for all }}n \in \N)>0$ and 
$P(A_{1,-x,o}\ge 1 \textrm{\upshape{ and }}A_{1,-y,o}\ge 1)>0$ for some 
distinct $x, y \in \Z^d$, then 
\begin{equation}
	\liminf_{n \to \infty}\frac{1}{n}\log |N_n| \ge 
        \frac{1}{2}c_1(A_1A_2)\log 2>0 \label{exp-growth2}
\end{equation}
$P$-almost surely on the event $\{|N_n| \ge 1 
\textrm{\upshape{ for all }}n \in \N\}$.}
\item{If both of the assumptions in $(1)$ and $(2)$ fail, then 
\begin{equation}
	{\limsup_{n \to \infty}}\frac{1}{n}\log |N_n| \le 0 
        \label{subexp-growth}
\end{equation}
$P$-almost surely.}
\end{enumerate}
The same assertions hold for DLSE with the {second} assumption 
in {\upshape (2)} replaced by 
\begin{equation}
	P(A_{1,o, x}\ge 1 \textrm{\upshape{ and }}A_{1,o, y}\ge 1)>0 
	\textrm{\upshape{ for some distinct }}x, y \in \Z^d.\label{DLSE}
\end{equation}
\end{thm}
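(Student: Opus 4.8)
The plan is to deduce all three parts from Theorem~\ref{main} and Corollary~\ref{iteration} applied to the matrices $B_n=A_n$, after reducing the general initial condition to a single site. Since every entry of $A$ is nonnegative the process is monotone in its initial data, so whenever $x\in\supp N_0$ we have $|N_n|\ge N_{0,x}\,|M^{(0,x)}_n|$, where $M^{(0,x)}$ is the single-site process of \eqref{restart}; by translation invariance $M^{(0,x)}$ has the same law as the origin-started chain and $c_\delta(A_1)$ is unchanged. For finitely supported $N_0$ the event $\{|N_n|\ge1\ \forall n\}$ is, up to a null set, covered by the single-site survival events $\{(0,x)\in\cP\}$, $x\in\supp N_0$, and on each of these the growth rate of $|N_n|$ dominates that of $|M^{(0,x)}_n|$. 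Thus part (1) is immediate from \eqref{growth2} of Theorem~\ref{main} with $B=A_1$, the hypothesis $c_\delta(A_1)>0$ being exactly what Theorem~\ref{main} requires.

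The heart of the matter is part (2), where I must manufacture a heavy entry in a two-step product so as to apply Corollary~\ref{iteration} with $m=2$ and $\delta=1$. Concretely, I will exhibit a target $u$ and a positive-probability event on which $(A_1A_2)_{o,u}\ge 2$, which together with positivity of the single-site survival probability yields $c_1(A_1A_2)>0$. Writing $(A_1A_2)_{o,u}=\sum_w A_{1,o,w}A_{2,w,u}$ and using \eqref{value} (so every nonzero factor is $\ge1$), it suffices to route two disjoint two-step paths $o\to w_i\to u$, $i=1,2$, with $w_1\ne w_2$. I will take $w_1=y$, $w_2=x$ and $u=x+y$. The second step then demands $A_{2,x,x+y}\ge1$ and $A_{2,y,x+y}\ge1$; this pair is precisely the translate by $x+y$ of the hypothesis $\{A_{1,-x,o}\ge1,\ A_{1,-y,o}\ge1\}$, hence has positive probability by shift invariance \eqref{shift-inv}. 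The first step demands $A_{1,o,y}\ge1$ and $A_{1,o,x}\ge1$; each has positive probability by \eqref{shift-inv} (since $P(A_{o,x}\ge1)=P(A_{-x,o}\ge1)>0$), and because they concern the two \emph{distinct columns} $x$ and $y$, the column independence \eqref{independence} makes them jointly positive. Multiplying the independent contributions from $A_1$ and $A_2$ gives $P((A_1A_2)_{o,x+y}\ge2)>0$. The factor $P(|M_n|\ge1\ \forall n)$ in $c_1(A_1A_2)$ is positive because $P(|N_n|\ge1\ \forall n)>0$ forces (by the same single-site reduction) positive single-site survival of the $A$-chain, which is equivalent — mass cannot reappear once the total vanishes — to survival of the even-time $A_1A_2$-chain. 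Corollary~\ref{iteration} then yields \eqref{exp-growth2}, and $\tfrac12 c_1(A_1A_2)\log2>0$.

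For the DLSE the same construction works with the roles of rows and columns interchanged: the first step $\{A_{1,o,x}\ge1,\ A_{1,o,y}\ge1\}$ is now \emph{given} as the hypothesis \eqref{DLSE}, while the second step $\{A_{2,x,x+y}\ge1,\ A_{2,y,x+y}\ge1\}$ is made jointly positive by the \emph{row} independence of the dual together with \eqref{shift-inv}. Part (3) is a degeneracy analysis: if $P(|N_n|\ge1\ \forall n)=0$ the claim is trivial, so suppose it is positive. Then failure of (1) gives $\sup_x P(A_{o,x}\ge1+\delta)=0$ for every $\delta$, whence $A$ is $\{0,1\}$-valued by \eqref{value}; failure of (2) then forces every column (resp. row, for DLSE) of $A$ to carry at most one nonzero entry. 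Both properties are stable under matrix products, so $\prod_{k=1}^m A_k$ is $\{0,1\}$-valued with at most one nonzero per column for every $m$, giving $c_\delta(\prod_{k=1}^m A_k)=0$ for all $\delta>0$ and $m$. The finite-range hypothesis \eqref{range} is built into the LSE, so the second assertion of Corollary~\ref{iteration} applies and delivers \eqref{subexp-growth}; concretely the unique-source structure gives $|N_n|\le |N_0|\,(2r_An+1)^d$, since each target inherits the mass of a single source and the reachable set lies in a ball of radius $r_An$.

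The main obstacle I anticipate is the event construction in part (2): one must correctly pair the hypothesis entry-pair from one factor with an \emph{independently} produced entry-pair from the other so that the two internal vertices $w_1,w_2$ feed a common target, and one must invoke exactly the right independence — columns for LSE, rows for DLSE — to turn two positive marginals into a positive joint probability. The remaining points, namely matching the $N$-survival event to single-site survival for finitely supported $N_0$ and the even/odd-time equivalence of survival, are routine but should be stated with care.
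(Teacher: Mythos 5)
Your proposal is correct and follows essentially the same route as the paper: part (1) directly from Theorem~\ref{main}, part (2) via the two-path event $\{A_{1,o,x}\ge 1,\ A_{1,o,y}\ge 1,\ A_{2,x,x+y}\ge 1,\ A_{2,y,x+y}\ge 1\}$ whose probability is bounded below using column (resp.\ row) independence and shift invariance, and part (3) by showing the products $\prod_{k=1}^m A_k$ remain binary and invoking the finite-range half of Corollary~\ref{iteration}. The extra details you supply (the single-site covering of the survival event for general finitely supported $N_0$, and the equivalence of survival for the time-$m$ subsampled chain) are points the paper compresses into ``by linearity'' and the application of Corollary~\ref{iteration}, and they are handled correctly.
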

\begin{proof}
We prove (1)--(3) only for the LSE case since the proof for the 
DLSE case is almost identical. 
By linearity it suffices to show the claims for the case 
$N_0=(\delta_{o,x})_{x \in \Z^d}$. 
Note first that (1) is a direct consequence of Theorem~\ref{main}. 
Next, suppose that the assumption in (2) holds. 
Then it follows from translation invariance and \eqref{independence} that
\begin{equation}
\begin{split}
	& P((A_1A_2)_{o,x+y}\ge 2)\\
	& \quad \ge P(A_{1,o,x}\ge 1, A_{1, o, y}\ge 1, A_{2, x, x+y}
        \ge 1 \textrm{, and }A_{2, y, x+y}\ge 1)\\
	& \quad \ge P(A_{1,-x,o}\ge 1)P(A_{1, -y, o}\ge 1)P(A_{2, -x, o}
        \ge 1 \textrm{ and }A_{2, -y, o}\ge 1)\\
	& \quad >0.
\end{split}
\end{equation}
Therefore Corollary~\ref{iteration} shows \eqref{exp-growth2}.
Finally, we prove (3) by checking the assumption for~\eqref{nongrowth}. 
If both of the assumptions in $(1)$ and $(2)$ fail, 
then we have either 
\begin{equation}
	P(|N_n| \ge 1 \textrm{\upshape{ for all }}n \in \N)=0 \label{death}
\end{equation}
or
\begin{align}
  &P\left(A_{1,x,y} \in \{0,1\}\right)=1
  \textrm{ for all $x,y \in \Z^d$ and }\label{binary}\\
  &P(A_{1,-x,o}\ge 1 \textrm{ and }A_{1,-y,o}\ge 1)=0 
  \textrm{ for all distinct }x, y \in \Z^d.\label{non-cong}
\end{align}
From~\eqref{binary} and~\eqref{non-cong}
one can conclude that $(\prod_{k=1}^m A_k)_{x, y} \in \{0,1\}$ 
for all $m \in \N^*$ and $x, y \in \Z^d$. 
Thus in both cases, $c_{\delta}(\prod_{k=1}^mA_k)=0$ for all 
$\delta>0$ and $m \in \N$.
\end{proof}

Theorem~\ref{LSE} applies to both site and bond oriented percolations 
and the binary contact path process. 
Therefore we see that both the number of open paths in supercritical 
oriented percolation and 
the total mass of supercritical binary contact path process grow 
exponentially on the event of survival. 

\begin{rem}
For the oriented site/bond percolation with $d=1$, 
{it is possible to show by 
using the rightmost path instead of $\Gamma$ above that} 
\begin{equation}
	\liminf_{n \to \infty}\frac{1}{n}\log |N_n| \ge c
\end{equation}
with some absolute constant $c>0$ {almost surely on 
$\{|N_n| \ge 1 \textrm{ \upshape for all }n \in \N\}$}. 
This shows that our lower bound $c_1(A_1A_2)\log 2$ on the growth rate 
is not sharp in this case 
since it is smaller than the percolation 
probability which decreases to 0 as $p \downarrow p_c$, as was proved 
in~\cite{GH02}. 
\end{rem}

Finally, we show that surviving LSE and DLSE satisfy the assumption 
in (1) or (2) in Theorem~\ref{LSE} 
except for a few unimportant examples. 
Therefore for fairly general LSE and 
DLSE, we have the coincidence of the events 
that the process survives and that the total mass grows exponentially. 
Let us start by defining the exceptional class. 
We say that an LSE generated by $\{A_n\}_{n \in \N^*}$ is {\it trivial} 
if $A_n$ $(n \in \N^*)$ are nonrandom, that is, 
the laws of $A_n$ concentrate on one matrix. A DLSE generated by 
$\{A_n\}_{n \in \N^*}$ is said to be {\it trivial} 
if $A_n$ $(n \in \N^*)$ are nonrandom or expressed as
\begin{equation}
	A_{n, x, y}=\delta_{x+e_{n, x}, y}
\end{equation}
with $\{e_{n, x}\}_{(n,x) \in \N^*\times \Z^d}$ a family of 
$\Z^d$-valued independent and identically distributed 
random variables. In the latter case, $\{N_n\}_{n \in \N}$ is 
nothing but the coalescing random walks 
for which the questions about survival/extinction and about growth 
rate are trivial. 

\begin{cor}\label{cor-LSE}
For a nontrivial LSE or DLSE satisfying \eqref{value} and $|N_0|<\infty$, 
\begin{equation}
	\{|N_n| \ge 1 \textrm{\upshape{ for all }}n \in\N\}
	=\set{\liminf_{n \to \infty}\frac{1}{n}\log |N_n|>0}
        \label{survival=growth}
\end{equation}
modulo a $P$-null set. In particular, if 
\begin{equation}
	\lim_{n \to \infty}\frac{1}{n}E[\log(1+ |N_n|)] = 0, 
        \label{cor-LSE-nongrowth}
\end{equation}
then the process {dies out} almost surely. 
\end{cor}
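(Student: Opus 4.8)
The plan is to prove the two inclusions in~\eqref{survival=growth} separately and then read off the final assertion. As in the proof of Theorem~\ref{LSE}, by linearity it suffices to treat $N_0=(\delta_{o,x})_x$. The inclusion $\{\liminf_n\frac1n\log|N_n|>0\}\subseteq\{|N_n|\ge 1\ \forall n\}$ (modulo a null set) is the easy one: an induction on~\eqref{evol} shows that $N_{n,x}\in\{0\}\cup[1,\infty)$, so $|N_n|\ge 1$ is equivalent to $N_n\neq 0$, and $N_n=0$ is absorbing. Hence exponential growth forces $|N_n|\to\infty$, which forces the process never to vanish, i.e.\ survival.

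The substance is the reverse inclusion, and my strategy is to show that a nontrivial surviving LSE (resp.\ DLSE) necessarily satisfies hypothesis (1) or (2) of Theorem~\ref{LSE}, whence~\eqref{exp-growth} or~\eqref{exp-growth2} gives $\liminf_n\frac1n\log|N_n|>0$ on survival. If $P(|N_n|\ge 1\ \forall n)=0$ both events are null and there is nothing to prove, so I assume survival has positive probability and that (1) and (2) both fail. Exactly as in the proof of Theorem~\ref{LSE}(3), the failure of (1) yields~\eqref{binary} (the matrix is $0/1$) and the failure of (2) yields~\eqref{non-cong}, i.e.\ each column of $A_1$ carries at most one $1$ (for a DLSE, each row does). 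It then remains to show that such a \emph{surviving} LSE/DLSE must be trivial, contradicting the standing hypothesis.

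I would carry this out by analyzing the occupied set $O_n=\{x:N_{n,x}\ge 1\}$, which under~\eqref{binary}--\eqref{non-cong} satisfies $|O_n|=|N_n|\in\N$. For an LSE each column has a unique source, so $O_n$ branches without coalescence, and the expected number of children of a site equals the probability $\mu$ that a given column is occupied; by translation invariance $\mu\le 1$, so $|O_n|$ is a nonnegative integer-valued supermartingale, and a martingale precisely when every column is occupied almost surely. If $\mu<1$ the first-moment bound $P(|O_n|\ge 1)\le E|O_n|=\mu^n\to 0$ rules out survival. If $\mu=1$, then $|O_n|$ is a nonnegative integer martingale, hence converges and is eventually constant on survival, and a critical branching argument forces the per-site offspring law to be degenerate: each row and column of $A_1$ then carries exactly one $1$ which, together with column independence, must be deterministic, so $A_1$ is nonrandom and the LSE is trivial. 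For a DLSE the same bookkeeping shows $|N_n|$ is nonincreasing and, from $\delta_o$, stays equal to $1$ on survival; the process is thus a single particle that is lost the instant it sits on a row with no $1$, so survival forces every row to carry exactly one $1$, i.e.\ the coalescing-random-walk form $A_{n,x,y}=\delta_{x+e_{n,x},y}$, which is again trivial.

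The main obstacle is the critical LSE case: once $\mu=1$ one must exclude a nontrivial dynamics in which $|O_n|$ stays a positive constant forever. The cleanest route I see is to identify $|O_n|$ with the total size of a critical branching process---the preimage tree of the origin under the i.i.d.\ random maps $s_n$ defined by the columns---and to invoke extinction of non-degenerate critical branching, checking that the spatial dependence among nearby particles (which interact only across the finite range $r_A$) affects neither the first-moment identity $E[|O_{n+1}|\mid\cF_{1,n}]=\mu|O_n|$ nor the extinction conclusion. Finally, the displayed consequence follows from the established equality by Fatou's lemma: on survival $\frac1n\log(1+|N_n|)$ is eventually bounded below by a positive constant $c$, so $\liminf_n E[\frac1n\log(1+|N_n|)]\ge c\,P(|N_n|\ge 1\ \forall n)$, and if~\eqref{cor-LSE-nongrowth} holds this forces $P(|N_n|\ge 1\ \forall n)=0$, i.e.\ the process dies out almost surely.
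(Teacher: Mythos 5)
Your overall architecture is the same as the paper's: reduce \eqref{survival=growth} to showing that a nontrivial surviving LSE/DLSE must satisfy hypothesis (1) or (2) of Theorem~\ref{LSE}, observe that the failure of both yields \eqref{binary} and \eqref{non-cong} (so the LSE becomes a finite-range voter-type model and the DLSE a coalescing random walk killed at positive rate), and deduce the final assertion by Fatou. Your DLSE argument and your subcritical LSE case ($\mu<1$, killed by the first-moment bound) are correct and essentially identical to what the paper does.

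The gap is the critical LSE case $\mu=1$, which you yourself flag as the main obstacle. As written, you invoke ``extinction of non-degenerate critical branching'' for $|O_n|$, but $|O_n|$ is not the generation size of a Galton--Watson process: the offspring counts of two occupied sites within distance $2r_A$ are determined by overlapping collections of columns and are negatively correlated (a given column can furnish a child to at most one parent), so the classical critical-GW extinction theorem does not apply as stated, and ``checking that the spatial dependence \ldots{} affects neither \ldots{} nor the extinction conclusion'' is precisely the missing content. Also, the intermediate claim that survival forces the \emph{offspring law} to be degenerate is not quite the right target; what is needed is that a non-deterministic column law forces extinction. The standard repair --- in effect what the paper outsources to Lemma~1.3.3 of \cite{Yos08} via the voter-model domination --- is a martingale argument: $|O_n|$ is a nonnegative integer-valued martingale, hence a.s.\ convergent and eventually constant; by L\'evy's conditional Borel--Cantelli lemma, $\sum_n P(|O_{n+1}|\neq|O_n|\mid\cF_{1,n})<\infty$ a.s.\ on survival; and if the common column law is non-degenerate one checks (using column independence and the fact that a finite nonempty set cannot be invariant under a nonzero translation) that $P(|O_{n+1}|\neq|O_n|\mid\cF_{1,n})$ is bounded below by a positive constant uniformly over nonempty configurations of bounded cardinality, a contradiction. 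Your closing step (degenerate dynamics plus column independence and translation invariance forces $A_1$ to be a deterministic shift matrix, hence the LSE is trivial) is fine once this is in place. In short, the route is the paper's; the one step the paper handles by citation is the step your sketch does not actually close.
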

\begin{proof}
Let $\{N_n\}_{n \in \N}$ be a nontrivial LSE or DLSE generated by 
$\{A_n\}_{n \in \N^*}$. 
The {second} assertion follows from \eqref{survival=growth} and 
Fatou's lemma. 
To prove \eqref{survival=growth}, it suffices to show that if the 
assumptions in (1) and (2) in Theorem~\ref{LSE} fail, then  
\begin{equation}
	P(|N_n| \ge 1 \textrm{ for all }n \in\N) = 0.\label{extinct}
\end{equation}

We begin with the LSE case. 
Suppose that both of the assumptions fail. Then, either \eqref{extinct} 
holds or $A_n$ are binary matrices 
and for any fixed $y \in \Z^d$, $A_{n,x,y}=1$ for at most one $x \in \Z^d$. 
The latter means that our process is {stochastically dominated by} 
a finite range version of the 
nearest neighbor voter model (see p.1037 in~\cite{Yos08}). 
Then, under the non-triviality assumption, one can show that it 
{dies out} almost surely by an argument 
similar to the proof of 
Lemma~1.3.3 and subsequent Remark in~\cite{Yos08}. 
Next we turn to the DLSE case. 
In this case if both of the assumptions fail, then either
 \eqref{extinct} 
holds or $A_n$ are binary matrices 
and for any fixed $x \in \Z^d$, $A_{n,x,y}=1$ for at most one $y \in \Z^d$. 
Together with the non-triviality, the latter implies that $N_n$ is a 
coalescing random walks 
killed at positive rate, which {dies out} almost surely. 
\end{proof} 
\begin{rem}
{The last part of above proof gives an alternative proof
of Theorem~\ref{LSE}-(3) for DLSE without the finite range assumption. 
Since the proofs of (1) and (2) rely only on Theorem~\ref{main} and
the first half of Corollary~\ref{iteration}, it follows that
Theorem~\ref{LSE} for DLSE holds without the finite range assumption.}

\end{rem}
\subsection{Continuous time process}\label{CT-analogue}
We discuss the continuous time analogue of our results in this
subsection. 
More precisely, we show results similar to the preceding 
section for the continuous time version of LSE or DLSE studied 
in~\cite{NY10b,NY10a}.

Let us recall the definition of the process.
We introduce a random vector 
$K=(K_x)_{x \in \Z^d}$ such that each element takes value in 
$\{0\}\cup [1,\infty)$ and $K_x=0$ if $|x|$ is larger than
some positive constant $r_K$. 
Let us further introduce two mutually independent collections 
of i.i.d.~random variables 
$\{\tau^{z,i}\}_{z \in \Z^d, i \in \N^*}$
and $\{K^{z,i}\}_{z \in \Z^d, i \in \N^*}$
whose distributions are mean-one exponential and 
the same as that of $K$, respectively. 
We suppose that the process $\{Y_t\}_{t \ge 0}$ starts from 
$Y_0\in[0,\infty)^{\Z^d}$ and at each time 
$t=\tau^{z,1}+\tau^{z,2}+\cdots+\tau^{z,i}$ for some 
$(z,i) \in \Z^d \times \N^*$, 
the process is updated as follows:
\begin{equation}
 Y_{t,x}=\begin{cases}
	     K_{0}^{z,i}Y_{t-,z} &\textrm{if }x=z, \\
             Y_{t-,x}+K_{x-z}^{z,i}Y_{t-,z} 
             &\textrm{if }x \neq z. 
	    \end{cases}\label{update}
\end{equation}
We also consider the dual process $Z_t \in [0, \infty)^{\Z^d}$, 
$t \ge 0$ which evolves in the same way as $\{Y_t\}_{t \ge 0}$
except that \eqref{update} is replaced by its transpose:
\begin{equation}
 Z_{t,x}=\begin{cases}
	     \sum_{y \in \Z^d} K_{y-x}^{z,i}Z_{t-,y} 
             &\textrm{if }x=z, \\
             Z_{t-,x} 
             &\textrm{if }x \neq z. 
	    \end{cases}\label{dual}
\end{equation}
{
\begin{rem}
Note that the process~\eqref{update} is a continuous-time 
counterpart of DLSE, while its dual~\eqref{dual} is 
that of LSE. 
\end{rem}}
For processes of above types, we have the following simple 
characterization of exponential growth in terms of $K$.
\begin{thm}\label{CT}
Let $\{Y_t\}_{t \ge 0}$ be the process defined above 
satisfying $|Y_0|<\infty$.
Then, the following holds:
\begin{enumerate}
\setlength{\itemsep}{10pt}
\item{If $P(|Y_t| \ge 1 \textrm{\upshape{ for all }}t \ge 0)>0$ 
and $P(\sum_{x\in\Z^d}K_x\ge 1+\delta)>0$ for some $\delta>0$, 
then 
\begin{equation}
	\liminf_{t \to \infty}\frac{1}{t}\log |Y_t| > 0
\end{equation}
$P$-almost surely on the event 
$\{|Y_t| \ge 1 \textrm{\upshape{ for all }}t \ge 0\}$.}
\item{Otherwise, 
\begin{equation}
	{\limsup_{t \to \infty}}\frac{1}{t}\log |Y_t| \le 0 
        \label{subexp-growth2}
\end{equation}
$P$-almost surely.}
\end{enumerate}
The same assertions hold for the dual process $\{Z_t\}_{t \ge 0}$. 
\end{thm}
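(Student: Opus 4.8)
The plan is to reduce Theorem~\ref{CT} to the discrete-time results by sampling the process at integer times. Let $B_n$ denote the random matrix recording the net effect on $\{Y_t\}$ of all updates~\eqref{update} occurring in $(n-1,n]$, so that $Y_n=Y_{n-1}B_n$ with $\{B_n\}_{n\in\N^*}$ independent and identically distributed. Two structural facts come first. Since every single update matrix has entries in $\{0\}\cup[1,\infty)$, and since a finite product of nonnegative matrices whose nonzero entries are $\ge 1$ again has all entries in $\{0\}\cup[1,\infty)$, each $B_n$ takes values in $\{0\}\cup[1,\infty)$; translation invariance of $\{B_n\}$ is inherited from the spatial homogeneity of the clocks and of $K$. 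Thus $\{Y_n\}_{n\in\N}$ is exactly a process of the type~\eqref{evol}, and $\{Z_n\}_{n\in\N}$ is the same with $B_n$ replaced by its transpose. By linearity and translation invariance it suffices to treat $Y_0=\delta_o$, as in the proof of Theorem~\ref{LSE}. Finally, since $\{|Y_t|\ge1\text{ for all }t\ge0\}\subseteq\{|Y_n|\ge1\text{ for all }n\in\N\}$, it is enough to prove the growth statement almost surely on the latter, larger event.

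To invoke Corollary~\ref{iteration} in case (1) I must produce $\delta'>0$ and $m\in\N^*$ with $c_{\delta'}(\prod_{k=1}^m B_k)>0$. The survival factor in~\eqref{rate} is positive by hypothesis, so the issue is to create a heavy entry of a product with positive probability, and here I exploit the constraint $K\in(\{0\}\cup[1,\infty))^{\Z^d}$: on $\{\sum_x K_x\ge1+\delta\}$ either a single coordinate has $K_w\ge1+\delta$, or at least two coordinates have $K_{w_1},K_{w_2}\ge1$. Hence at least one of $\{K_w\ge1+\delta\}$ (some $w$) or $\{K_{w_1}\ge1,\ K_{w_2}\ge1\}$ (some distinct $w_1,w_2$) has positive probability. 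In the first case I build a positive-probability clock configuration in which the origin fires once, depositing mass $\ge1+\delta$ at $o+w$, and $o+w$ is left untouched afterwards, giving $(B_1)_{o,o+w}\ge1+\delta$. In the second case I realize a branch-then-merge configuration: the origin fires once depositing unit masses at $o+w_1$ and $o+w_2$, which are then routed by later updates (offset $w_2$ from $o+w_1$ and offset $w_1$ from $o+w_2$) to the common site $o+w_1+w_2$, so that $(\prod_{k=1}^m B_k)_{o,o+w_1+w_2}\ge2$ for a bounded $m$. Either way $c_{\delta'}(\prod_{k=1}^m B_k)>0$, and Corollary~\ref{iteration} yields an open path $\Gamma$ (with respect to $\{B_n\}$) along which $Y_{n,\Gamma(n)}$ grows exponentially, so $\liminf_n\frac{1}{n}\log|Y_n|>0$ on survival. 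The dual $Z$ is handled by the transpose (gathering) construction; the relevant scalar is again $\sum_x K_x$, since the two-nonzero condition is identical for rows and columns.

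The passage from integer to continuous time is where the value constraint is essential, because a crude total-mass bound fails: a single update can annihilate an arbitrarily large mass. Instead, for $t\in[n,n+1)$ I factor $B_{n+1}=B_{n,t}B_{t,n+1}$, where $B_{n,t}$ records the updates in $(n,t]$ and $Y_t=Y_nB_{n,t}$. Openness of $\Gamma$ gives $(B_{n+1})_{\Gamma(n),\Gamma(n+1)}\ge1$; expanding the product and using that every nonzero entry is $\ge1$, there is a site $w$ with $(B_{n,t})_{\Gamma(n),w}\ge1$, whence $|Y_t|\ge Y_{t,w}\ge Y_{n,\Gamma(n)}(B_{n,t})_{\Gamma(n),w}\ge Y_{n,\Gamma(n)}$. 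Therefore $\liminf_{t\to\infty}\frac{1}{t}\log|Y_t|\ge\liminf_n\frac{1}{n}\log Y_{n,\Gamma(n)}>0$, proving (1); the same factorization applies verbatim to $Z$.

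For (2), if $P(|Y_t|\ge1\text{ for all }t)=0$ the process dies out almost surely and the bound is trivial, so assume $\sum_x K_x\le1$ almost surely. A direct generator computation (the clock at each site fires at rate one, and a firing changes the total mass by $Y_{t-,z}(\sum_x K_x-1)$ in the $Y$-case, with the transposed identity in the $Z$-case) then gives $\frac{d}{dt}E|Y_t|=(\mu-1)E|Y_t|$ with $\mu=E[\sum_x K_x]\le1$, and the identical equation for $E|Z_t|$; hence $E|Y_t|\le|Y_0|$ for all $t$. A Markov inequality at integer times together with Borel--Cantelli, plus a routine control of $\sup_{t\in[n,n+1]}|Y_t|$, yields $\limsup_{t\to\infty}\frac{1}{t}\log|Y_t|\le0$ almost surely. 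I expect the main obstacle to be the explicit construction of the positive-probability clock configurations in case (1): one must choose a finite family of independent exponential firing times and i.i.d.\ copies of $K$ that simultaneously realize the branch-then-merge and keep the target site undisturbed, and then verify the resulting lower bound on $\prod_{k=1}^m B_k$. By contrast, the reduction to the discrete skeleton and the integer-to-continuous step are comparatively routine once the constraint $\{0\}\cup[1,\infty)$ is used as above.
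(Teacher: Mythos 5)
Your proposal is correct, and for part (1) it is essentially the paper's argument: discretize at integer times, observe that the induced one-step matrices $B_n$ inherit values in $\{0\}\cup[1,\infty)$, verify the hypothesis of Theorem~\ref{main}/Corollary~\ref{iteration} by an explicit positive-probability configuration of clocks and $K$-samples (splitting into the case of a single coordinate with $K_w\ge 1+\delta$ and the branch-then-merge case of two unit coordinates), and then interpolate to continuous time by following the open path, exactly as the paper does by reference to the proof of Corollary~\ref{iteration}. The one place you genuinely diverge is part (2). There the paper argues structurally: when $\sum_x K_x\le 1$ a.s., the process $\{Y_t\}$ is a (killed) coalescing random walk, so $|Y_t|\le|Y_0|$ outright, while for the dual $\{Z_t\}$ it observes that all values stay binary and invokes Harris's result on additive set-valued Markov processes to get $\#\,{\rm supp}\,Z_t=O(t^d)$, hence polynomial growth. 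You instead run a first-moment computation, $\frac{\d}{\d t}E|Y_t|=(\mu-1)E|Y_t|$ with $\mu=E[\sum_x K_x]\le 1$, and conclude via Markov and Borel--Cantelli. This is more elementary and self-contained (no appeal to Harris), and it even avoids the binary-value observation. One caveat: for $\{Z_t\}$ the ``routine control of $\sup_{t\in[n,n+1]}|Z_t|$'' is not entirely innocent, since $|Z_t|$ is not monotone and the crude domination by the non-killing version only yields $\limsup_t t^{-1}\log|Z_t|\le\mu$, which is useless here; the clean fix is to note that under $\mu\le 1$ the computation above actually shows $|Z_t|$ is a nonnegative supermartingale, so Doob's maximal inequality gives $P(\sup_{t\ge 0}|Z_t|\ge\lambda)\le |Z_0|/\lambda$ and the claim follows. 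With that one sentence added, your route for (2) is a valid and arguably simpler alternative to the paper's.
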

\begin{proof}
We prove (1) only for $\{Y_t\}_{t \ge 0}$. 
We first consider the time discretized process $\{Y_n\}_{n\in\N}$ 
and apply Theorem~\ref{main}. 
Then it can be extended to $\{Y_t\}_{t \ge 0}$ as in the proof of 
Corollary~\ref{iteration}. 

For given $\{\tau^{z,i}\}_{z \in \Z^d, i \in \N^*}$
and $\{K^{z,i}\}_{z \in \Z^d, i \in \N^*}$, 
let $B_{n+1,x,y}$ be $Y_{n+1,y}^{(n,x)}$, that is, the population
at $(n+1,y)$ starting from one particle at $(n,x)$ 
(cf.~\eqref{restart}).
Then it follows that $\{Y_n\}_{n\in\N}$ is nothing but the Markov chain 
described in Subsection~\ref{Setting and main results}. 
Therefore it suffices to check that 
$P(Y_{1,x}\ge 1+\delta)>0$ for some 
$x \in \Z^d$ when $Y_0=(\delta_{0,x})_{x \in \Z^d}$. 
Suppose first that 
$P(K_x\ge 1+\delta)>0$ for some $x \in \Z^d$. 
Then, 
\begin{equation}
\begin{split}
  &P(Y_{1,x}\ge 1+\delta)\\
  &\quad \ge P(\tau^{o,1}<1,
               \tau^{x,1}>1, 
               K^{o,1}_x \ge 1+\delta)\\
  &\quad =P(\tau^{o,1}<1)
          P(\tau^{x,1}>1) 
          P(K^{o,1}_x \ge 1+\delta)\\
  &\quad >0.
\end{split}
\end{equation}
Next, if $K_x \in\{0,1\}$ for all $x \in \Z^d$, then 
$P(\sum_{x\in\Z^d}K_x \ge 1+\delta)>0$ implies 
\begin{equation}
 P(K_x=K_y=1)>0 \textrm{ for some distinct }x,y \in \Z^d.
\end{equation}
Thus we have 
\begin{equation}
\begin{split}
  &P(Y_{1,x+y}\ge 2)\\
  &\quad \ge P(\tau^{o,1}\in[0, 1/2),
               \tau^{x,1}\in[1/2, 1),
               \tau^{y,1}\in[1/2, 1),
               \tau^{x+y,1}>1,\\
  &\quad\quad\quad\quad
   K^{o,1}_x=K^{o,1}_y=K^{x,1}_y=K^{y,1}_x=1)\\
  &\quad >0.
\end{split}
\end{equation}

The proof of (2) for $\{Y_t\}_{t \ge 0}$ is immediate since 
$P(\sum_{x\in\Z^d}K_x = 1)=1$ implies that 
$\{Y_t\}_{t \ge 0}$ is a coalescing random walk. 
To prove (2) for the dual process $\{Z_t\}_{t \ge 0}$, 
we use the fact that the cardinality of 
${\rm supp}Z_t=\{x\in\Z^d: Z_{t,x} \ge 1\}$
grows at most polynomially fast, which is proved in~\cite{Har78}. 
Indeed, ${\rm supp}Z_t$ forms an ``additive set-valued process''
introduced there and (13.10) in~\cite{Har78} implies that 
$\#{\rm supp}Z_t=O(t^d)$ almost surely. 
The rest of the proof is very similar 
to that of Theorem~\ref{LSE}-(3) and we omit the detail. 
\end{proof}
\begin{rem}
It should be pointed out that the generality of Theorem~\ref{main}
is important in this proof. 
Indeed, the matrix $B_n$ defined above is neither of finite 
range nor of independent entries. 
\end{rem}

One can also formulate and prove a continuous version of 
Corollary~\ref{cor-LSE}. 

\section*{Acknowledgement} 
The authors thank professor Vladas Sidoravicius for explaining the 
main result in~\cite{KNPS10}. 
{They are also grateful to the referee for constructive comments.}

\end{document}